\numberwithin{equation}{section}
\numberwithin{figure}{section}
\theoremstyle{plain}
\newtheorem{thm}{Theorem}
\theoremstyle{plain}
\newtheorem*{prop*}{Proposition}
\theoremstyle{definition}
\newtheorem{defn}[thm]{Definition}
\theoremstyle{remark}
\newtheorem*{rem*}{Remark}
\theoremstyle{remark}
\newtheorem{rem}{Remark}
\theoremstyle{plain}
\newtheorem{lem}[thm]{Lemma}
\theoremstyle{remark}
\newtheorem*{claim*}{Claim}
\theoremstyle{plain}
\newtheorem{prop}[thm]{Proposition}
\theoremstyle{plain}
\newtheorem{cor}[thm]{Corollary}
\theoremstyle{definition}
\newcommand{\mathcircumflex}[0]{\mbox{\^{}}}
\title{Dimension in TTT Structures}
\author{Daniel Lowengrub}
\address{Department of Mathematics\\ The Hebrew University of Jerusalem\\
  Jerusalem, 91904, Israel}
\email{lowdanie@gmail.com}
\begin{document}

\maketitle

\begin{abstract}
  In this paper we consider two types of dimension that can be defined
  for products of one-dimensional topologically totally transcendental (t.t.t) structures. The first
  is topological and considers the interior of projections of the set onto 
  lower dimensional products.  The second one is based on algebraic 
  dependence. We show that these definitions are equivalent for $\omega$-saturated
  one-dimensional t.t.t structures. We also prove that sets which are dense in products of
  these structures are comeager.
\end{abstract}

\section{Introduction}
There are a several of different ways to think about dimension in finite products of o-minimal structures.
One of the first ways to do this was described in \cite{KPS} using the cell decomposition theorem.
The idea is to first define dimension on relatively  simple sets called cells,
and then to generalize this to arbitrary sets by relying on the fact that any set can
be decomposed into a finite number of cells.

An alternative approach was introduced by Pillay in \cite{P2}. This definition is of
a more algebraic flavor and it is based on the notion of algebraic dependence. Assuming
that this dependence is well behaved, we can obtain a concept of dimension
in a way that is quite similar to what is done in the case of linear independence in 
linear algebra.

A third possibility is to take a topological route and define the dimension of a set $X$
as the largest integer $k\in\mathbb{N}$ such that some projection onto $M^k$ has an 
interior.

It is natural to ask whether the various definition coincide. We can also ask whether
it is possible to extend these types of dimension beyond the o-minimal setting. And assuming we can,
how how does this affect their relationship with one another?

In \cite[1.4]{P2} Pillay showed that for o-minimal structures, the second and third definitions
are equivalent. As he mentions in \cite[1.5]{P3}, they are also equivalent to the first
definition.

A natural generalization of o-minimal structures is that of a \emph{first order topological structure}
which was introduced by Pillay \cite{P1}. In particular, a subset of these structures called
\emph{one dimensional topologically totally transcendental} (1-t.t.t) structures share several important
characteristics with the o-minimal ones. For example, they have the exchange property which allows
us to define dimension using algebraic dependence.

Mathews proved in \cite[8.8]{M} that the equivalence mentioned above holds in
the generalized setting of first order topological structures which have both the exchange
property and what he defined as the cell decomposition property.

In this paper we prove the equivalence of the second two kinds of dimension for $\omega$-saturated
1-t.t.t structures. We note that for $\omega$-saturated connected first order 
topological structures, Mathew's cell decomposition implies 1-t.t.t.

In addition, we'll use some of the machinery developed during
the proof in order to obtain additional information about dense sets in this type of structure.
Specifically, we'll show that dense sets must be comeager.

\begin{prop}\label{prop:dense-is-comeager}
  Suppose $M$ is an $\omega$-saturated 1-t.t.t structure. Let $X\subset M_{t}^{n}$ be
  a dense definable set. Then $int(X)\subset M_{t}^{n}$ is dense as
  well. 
\end{prop}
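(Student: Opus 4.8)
The plan is to prove density of $int(X)$ by a purely local argument: it suffices to show that $int(X)$ meets every nonempty basic open box $U\subseteq M_t^{n}$. Since $U$ is open one has $int(X)\cap U=int(X\cap U)$, so the goal becomes: whenever $X\cap U$ is dense in $U$ (which it is, because $X$ is dense and $U$ is open), its interior is nonempty. Rather than attack this fiber by fiber, I would route the whole argument through the dimension machinery developed for the main equivalence theorem, which lets one trade the topological statement ``nonempty interior'' for the algebraic statement ``full dimension.''

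Concretely, I would use two facts that the preceding development makes available. First, for a definable set $A\subseteq M_t^{n}$ one has $\dim A=n$ if and only if $int(A)\neq\emptyset$; for the topological notion this is immediate (the only projection onto $M^{n}$ is the identity), and via the equivalence theorem it transfers to the algebraic dimension. Second, dimension is additive over finite unions, $\dim(A\cup B)=\max(\dim A,\dim B)$. With these in hand the argument is short. Fix a nonempty basic open box $U$; then $\dim U=n$ since $U$ is open and nonempty. Suppose toward a contradiction that $int(X)\cap U=\emptyset$, i.e.\ $int(X\cap U)=\emptyset$. By the first fact this forces $\dim(X\cap U)\le n-1$. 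Writing $U=(X\cap U)\cup(U\setminus X)$ and applying additivity gives $n=\dim U=\max\bigl(\dim(X\cap U),\dim(U\setminus X)\bigr)$, whence $\dim(U\setminus X)=n$. By the first fact again, $int(U\setminus X)\neq\emptyset$. But $int(U\setminus X)$ is then a nonempty open subset of $U$ disjoint from $X$, contradicting the density of $X$. Hence $int(X)\cap U\neq\emptyset$, and as $U$ was an arbitrary basic open box, $int(X)$ is dense.

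The main point to pin down, and the only real obstacle, is the additivity of dimension for the \emph{topological} notion, since I am applying it to $U\setminus X$, a set that a priori carries no interior at all. Additivity is natural for the algebraic (acl-based) dimension, so the clean way to justify $\dim(U\setminus X)=n$ is to pass to algebraic dimension through the main equivalence theorem, use additivity there, and translate back via $\dim=n\Leftrightarrow int\neq\emptyset$. I expect this to be exactly the ``machinery from the proof'' alluded to in the introduction. As a fallback not relying on additivity, one could instead induct on $n$: set $D=\{a\in M_t^{n-1}:X_a\text{ is dense in }M_t\}$, show $D$ is dense, apply the inductive hypothesis to it, and combine the one-dimensional dichotomy (an infinite definable subset of $M_t$ has nonempty interior, so dense fibers have cofinite interior) with uniform finiteness of boundaries in definable families coming from $\omega$-saturation. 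The delicate step in that route is precisely showing that a definable set with uniformly finite fibers over the base is nowhere dense, which is essentially the additivity fact in disguise; this is why I would prefer the dimension argument above.
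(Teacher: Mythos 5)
Your proof is correct, and it is not circular: the equivalence theorem $rk(X)=\dim(X)$ is proved in the paper from Propositions \ref{pro:rank<dim} and \ref{pro:dim<rank}, neither of which depends on the present proposition, so you are free to invoke it. But the route differs from the paper's in where the key dichotomy comes from. Both proofs share the same skeleton: fix a basis set $U$, note that density of $X$ forces $U\setminus X$ to have empty interior, and conclude that $U\cap X$ has interior. The paper justifies this last step by citing Corollary \ref{cor:complement-interior} ($U$ has interior, $Y=U\setminus X\subset U$ has none, hence $U\setminus Y=U\cap X$ has interior), which is proved by a direct induction on $n$ using Lemma \ref{lem:has-interior}. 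You instead re-derive exactly this dichotomy --- that a definable set with interior cannot be written as a union of two definable sets both without interior --- by passing to algebraic rank, where additivity $rk(A\cup B)=\max(rk(A),rk(B))$ is immediate from Definition \ref{defn:rank-of-set} (a realization of a type in $(A\cup B)(N)$ lies in $A(N)$ or $B(N)$), and translating back through the equivalence theorem together with the observation that $\dim(A)=n\iff int(A)\neq\emptyset$. What the paper's route buys is economy and independence: the proposition then sits upstream of the equivalence theorem and needs only the interior-complement corollary. What your route buys is a cleaner conceptual picture --- once $rk=\dim$ is available, the whole statement is a two-line consequence of additivity --- at the cost of invoking the heaviest result in the paper (including the fact, cited there, that elementary extensions of 1-t.t.t structures are 1-t.t.t) to prove something that the more elementary corollary already gives. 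One small point worth making explicit if you write this up: the additivity of topological dimension is not obvious on its own (as you correctly flag), so the passage through rank is genuinely needed in your argument, and you should state the union fact for rank as a lemma rather than calling it ``natural.''
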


\section{Preliminaries}
\label{sec:preliminaries}

We start by presenting the definitions from \cite{P1} necessary for introducing t.t.t structures.

\begin{defn}\label{defn:first-order-top-th}
  Let $M$ be a two sorted $L$ structure with sorts $M_{t}$ and $M_{b}$
  and let $\phi(x,y_{1},\dots,y_{k})$ be an $L$ formula such that
  $\{\phi^{M_{t}}(x,\bar{a})\vert\bar{a}\in M_{b}^{k}\}$ is a basis
  for a topology on $M_{t}$. Then the pair $(M,\phi)$ will be called
  a \emph{first order topological structure}. When we talk about the
  topology of $M_{t}$ we mean the one generated by the basis described
  above.
\end{defn}

We'll also be using the following property:
 
(A) Every definable set $X\subset M_{t}$ is a boolean combination
of definable open subsets.

\begin{defn}\label{defn:dimension}
  Let $M$ be a first order topological structure satisfying (A) such that $M_t$ is Hausdorff and let
  $X\subset M_{t}$ be a closed definable subset of $M_{t}$. The ordinal
  valued $D_{M}(X)$ is defined by:
  \begin{enumerate}
  \item If $X\neq\emptyset$ then $D_{M}(X)\geq0$.
  \item If $\delta$ is a limit ordinal and $D_{M}(X)\geq\alpha$ for all
    $\alpha<\delta$ then $D_{M}(X)\geq\delta$.
  \item If there's a closed definable $Y\subset M_{t}$ such that $Y\subset X$,
    $Y$ has no interior in $X$ and $D_{M}(Y)\geq\alpha$ then $D_{M}(X)\geq\alpha+1$.
  \end{enumerate}

  Furthermore, we'll write $D_{M}(X)=\alpha$ if $D_{M}(X)\geq\alpha$ and 
  $D_{M}(X)\ngeq\alpha+1$.
  We'll write $D_{M}(X)=\infty$ if $D_{M}(X)\geq\alpha$ for all $\alpha$.
\end{defn}

\begin{defn}\label{defn:has-dim}
  Let $M$ be a first order topological structure satisfying (A) such that $M_t$ is Hausdorff. 
  We say that $M$ \emph{has dimension }if $D_{M}(X)\neq\infty$ for
  all closed definable subsets $X\subset M_{t}$.
\end{defn}

\begin{defn}\label{defn:connected-components}
  Let $M$ be a first order topological structure satisfying (A) such that $M_t$ is Hausdorff. 
  Let $X\subset M_{t}$ be a definable subset. Then $d_{M}(X)$ is the maximum
  $d<\omega$ such that there are disjoint definable clopen $X_{1},\dots,X_{d}\subset X$
  with $X=\cup_{i=1}^{d}X_{i}$ , and $\infty$ if no such $d$ exists.
\end{defn}

We're now ready to introduce t.t.t structures.

\begin{defn}\label{defn:ttt}
  We say that $M$ is \emph{topologically totally transcendental (t.t.t)}
  if $M$ is a first order topological structure satisfying (A) with
  dimension such that $M_t$ is Hausdorff and for every definable set $X\subset M_{t}$, $d_{M}(X)<\infty$.
  We say that a theory $T$ is t.t.t is every model of $T$ is t.t.t.
\end{defn}

As mentioned in the introduction, 1-t.t.t will stand for \emph{one-dimensional t.t.t}.

The following lemma was proved by Pillay \cite[6.6]{P1} and will be used extensively.

\begin{lem}\label{lem:Let--be}
  Let $M$ be a 1-t.t.t structure. Then:
  \begin{enumerate}
  \item For any closed and definable $X\subset M_{t}$, $D(X)=0$ iff $X$
    is finite.
  \item The set of isolated points of $M_{t}$ is finite.
  \item For any definable $X\subset M_{t}$ there are pairwise disjoint definably
    connected definable open subsets $X_{1},\dots,X_{m}\subset M_{t}$
    and a finite set $Y\subset M_{t}$ such that $X=(\cup_{i=1}^{m}X_{i})\cup Y$.
  \item For any definable $X\subset M_{t}$, the set of boundary points of
    $X$ is finite.
  \end{enumerate}
\end{lem}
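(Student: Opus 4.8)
The plan is to deduce all four assertions from the two finiteness constraints built into the definition of a 1-t.t.t structure: that $d_{M}(Z)<\infty$ for every definable $Z\subseteq M_{t}$, and that $D(M_{t})=1$, together with the fact that $M_{t}$ is Hausdorff and that the interior, closure and boundary of a definable set are again definable (each is cut out by a first-order formula in the basis $\phi$). Throughout I write $\partial X=\overline{X}\setminus\operatorname{int}(X)$.

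For (1) I would argue both directions through discreteness. If $X$ is finite then, being a finite Hausdorff space, it is discrete, so every subset is open in $X$; hence no nonempty closed $Y\subseteq X$ can have empty interior in $X$, clause (3) of the definition of $D$ is never triggered, and $D(X)=0$. Conversely, if $X$ is infinite and closed I would split into cases: if $X$ has a point $p$ that is not isolated in $X$, then $\{p\}$ is a nonempty closed definable subset with empty interior in $X$, so $D(X)\geq 1$; if instead every point of $X$ is isolated, then $X$ is an infinite discrete definable set, and peeling off single points exhibits decompositions $X=\{x_{1}\}\sqcup\cdots\sqcup\{x_{d-1}\}\sqcup(X\setminus\{x_{1},\dots,x_{d-1}\})$ into $d$ nonempty definable clopen pieces for every $d$, contradicting $d_{M}(X)<\infty$. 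Part (2) rests on the same contradiction: the set $I$ of isolated points of $M_{t}$ is definable (a point $p$ lies in $I$ iff some basic neighborhood of $p$ equals $\{p\}$) and is discrete as a subspace, so if it were infinite the same peeling argument would force $d_{M}(I)=\infty$.

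The heart of the lemma is (4), which I would prove first for open sets and then bootstrap to arbitrary definable sets via property (A). If $U$ is open and definable, then $\partial U=\overline{U}\setminus U$ is closed and definable and has empty interior, since a nonempty open subset of $\overline{U}$ must meet the dense set $U$. Were $\partial U$ infinite, part (1) would give $D(\partial U)\geq 1$, and as $\partial U$ is a closed definable subset of $M_{t}$ with empty interior in $M_{t}$, clause (3) of the definition of $D$ would yield $D(M_{t})\geq 2$, contradicting one-dimensionality; hence $\partial U$ is finite. For a general definable $X$, property (A) writes $X$ as a boolean combination of finitely many open definable sets $U_{1},\dots,U_{n}$. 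Setting $B=\bigcup_{i}\partial U_{i}$, which is finite by the open case, I would show that each $U_{i}\setminus B$ is clopen in the open set $M_{t}\setminus B$: it is open because $U_{i}$ is, and its complement $U_{i}^{c}\setminus B$ is contained in $\operatorname{int}(U_{i}^{c})$, since a point of $U_{i}^{c}$ lying outside $B\supseteq\partial U_{i}$ cannot lie in $\overline{U_{i}}$. Boolean combinations of clopen sets are clopen, so both $X\setminus B$ and $X^{c}\setminus B$ are open in $M_{t}$, which forces $\partial X\subseteq B$ and hence $\partial X$ finite.

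Finally (3) follows quickly. Given definable $X$, set $U=\operatorname{int}(X)$, an open definable set, and $Y=X\setminus U$; since $Y\subseteq\partial X$, part (4) makes $Y$ finite. Because $d_{M}(U)<\infty$, a decomposition $U=X_{1}\sqcup\cdots\sqcup X_{m}$ into a maximal number of nonempty definable clopen pieces exists, and maximality forces each $X_{i}$ to be definably connected, while clopenness in the open set $U$ makes each $X_{i}$ open in $M_{t}$; thus $X=(\bigcup_{i=1}^{m}X_{i})\cup Y$ is the desired decomposition. The main obstacle is the general case of (4): the open case is immediate, but passing to arbitrary definable sets requires exactly property (A) together with the observation that, after deleting the finite set $B$, each generating open becomes clopen, so the whole boolean algebra they generate consists of sets that are open away from $B$.
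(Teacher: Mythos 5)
The paper does not actually prove this lemma: it is quoted from Pillay \cite[6.6]{P1} and used as a black box, so there is no in-paper argument to compare against; I have therefore checked your proof on its own merits, and it is correct and self-contained. All four hinges hold up. In (1), a finite Hausdorff space is discrete, so clause (3) of Definition \ref{defn:dimension} can never fire; conversely an infinite closed definable $X$ either has a point non-isolated in $X$, whose singleton witnesses $D(X)\geq 1$, or is infinite discrete, which your peeling argument correctly kills via $d_{M}(X)<\infty$ (note the pieces in Definition \ref{defn:connected-components} must implicitly be nonempty, as you tacitly assume, else $d_M$ is vacuously infinite). The same peeling gives (2), and you rightly observe that the isolated points form a definable set because the basis is uniformly definable. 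Your treatment of (4), the real content, is sound: for open $U$ the set $\overline{U}\setminus U$ is closed, definable (closure is first-order definable through the basis formula) and has empty interior, so if infinite it would combine with the converse direction of (1) to force $D(M_{t})\geq 2$, contradicting one-dimensionality; and the bootstrap through property (A) works because after removing the finite set $B=\bigcup_{i}\partial U_{i}$ each generator satisfies $U_{i}^{c}\setminus B=(M_{t}\setminus\overline{U_{i}})\setminus B$, so $U_{i}\setminus B$ is clopen in the open set $M_{t}\setminus B$, boolean combinations of relatively clopen sets are relatively clopen, and hence $\partial X\subseteq B$. Your derivation of (3) from a decomposition of $\operatorname{int}(X)$ realizing $d_{M}(\operatorname{int}(X))$ is also right: maximality forces each piece to be definably connected, clopen-in-open makes each piece open in $M_{t}$, and $X\setminus\operatorname{int}(X)\subseteq\partial X$ is finite by (4). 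The only blemish is an edge case in the statement itself rather than in your argument: under the paper's conventions $D(\emptyset)\neq 0$, so (1) should be read for nonempty $X$.
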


\section{Defining Dimension in t.t.t Structures}
\label{sec:defin-dimens-t.t.t}

\begin{defn}\label{defn:exchange}
  \emph{(exchange) }Let $M$ be a first order structure. We say that
  $M$ has the \emph{exchange property } if for every $a,b\in M$ and
  a set $A\subset M$, if $b\in acl(A\cup a)$ and $b\notin acl(A)$
  then $a\in acl(A\cup b)$.
\end{defn}

The following theorem was proved by Pillay \cite[6.7]{P1}:
\begin{thm}\label{thm:ttt-exchange}
  Let $M$ be a 1-t.t.t structure. Then $M_{t}$ has the exchange property.
\end{thm}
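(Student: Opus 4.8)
The plan is to argue by contradiction, reducing the exchange property to an incompatibility between the two projections of a single $A$-definable binary relation on $M_t$. Suppose $b\in acl(A\cup\{a\})$ while $b\notin acl(A)$, and witness the first containment by an $L$-formula $\phi(x,y)$ over $A$ such that $\phi(b,a)$ holds and the fiber $\phi(M_t,a)=\{x:\phi(x,a)\}$ is finite. The first thing I would use is that finiteness of such fibers is a definable and uniformly bounded condition in a t.t.t structure: by Lemma~\ref{lem:Let--be}(3) an infinite definable subset of $M_t$ contains a nonempty open set while a finite one does not, and Lemma~\ref{lem:Let--be}(1) identifies finiteness with $D_M=0$. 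Granting the resulting uniform finiteness, I can pass to an $A$-definable set $V\ni a$ on which the fiber $\phi(M_t,y)$ has a constant size $k<\omega$, and set $E=\{(x,y):y\in V,\ \phi(x,y)\}$, so that every $x$-fiber of $E$ has exactly $k$ points. Assume toward a contradiction that $a\notin acl(A\cup\{b\})$.

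Next I would extract an open set from the failure of algebraicity. Since $a\notin acl(A\cup\{b\})$, the $A\cup\{b\}$-definable set $W=\{y\in V:\phi(b,y)\}$ contains $a$ and must be infinite, for a finite such set would witness $a\in acl(A\cup\{b\})$. By Lemma~\ref{lem:Let--be}(3), $W$ contains a nonempty definably connected open set $U$. Hence for every $y\in U$ the point $b$ is one of the $k$ solutions of $\phi(\cdot,y)$, i.e. the section $\{b\}\times U$ lies in $E$ and the $y$-fiber $E_b=\{y:(b,y)\in E\}$ has interior. The configuration is now sharp: $E$ is $A$-definable, each of its $x$-fibers has exactly $k$ points, and yet the $y$-fiber over the single element $b$ is infinite.

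The crux is to convert this asymmetry into $b\in acl(A)$, contradicting the hypothesis. I would consider the set $P=\{x\in M_t:\ E_x\text{ is infinite}\}$, which is $A$-definable by the same definability-of-finiteness used above and which contains $b$; the goal is to show $P$ is finite, for then $b\in acl(A)$. This reduces to a dimension count in $M_t\times M_t$: if $P$ were infinite it would contain a nonempty open $U'$ by Lemma~\ref{lem:Let--be}(3), and then $E\cap(U'\times V)$ would project onto the open (hence, in the one-dimensional sense, ``full'') set $U'$ with infinite fibers, while also projecting into $V$ with all fibers of size at most $k$. Since constant finite fibers should force $E$ to share the dimension of its $V$-projection $V\subseteq M_t$, it cannot simultaneously carry infinitely many infinite $U'$-fibers, and this contradiction forces $P$ finite. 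The main obstacle is precisely this last incompatibility: establishing the relevant additivity and upper-semicontinuity of fiber dimension for definable subsets of $M_t\times M_t$ using only the topological data at hand — Hausdorffness, property (A), the open-or-finite dichotomy of Lemma~\ref{lem:Let--be}(1),(3), and finiteness of boundaries in Lemma~\ref{lem:Let--be}(4) — rather than quoting additivity of $D_M$ for types, which is itself customarily derived from exchange and would make the argument circular. Once $P$ is shown finite, $b\in P$ yields $b\in acl(A)$, contradicting $b\notin acl(A)$; hence $a\in acl(A\cup\{b\})$, and $M_t$ has the exchange property.
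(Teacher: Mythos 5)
There is a genuine gap, and it sits exactly where you flag ``the main obstacle'': the incompatibility you need --- an $A$-definable $E\subseteq M_t^2$ cannot have every fiber $\{x:(x,y)\in E\}$ of size at most $k$ while every $x$ in a nonempty open set $U'$ has $\{y:(x,y)\in E\}$ infinite --- is not a routine dimension count but is precisely Lemma \ref{lem:existy} of this paper, i.e.\ the engine of Pillay's proof of \cite[6.7]{P1}, which is the proof the paper cites for Theorem \ref{thm:ttt-exchange} and reproduces (in adapted form) for Lemma \ref{lem:existy}. Your substitute --- ``constant finite fibers should force $E$ to share the dimension of its $V$-projection'' --- is exactly the additivity principle that, as you yourself note, is customarily derived from exchange; nothing in your proposal supplies a non-circular version, so the argument stops there. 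The real proof of this step uses neither additivity nor semicontinuity of dimension; it is where the specifically t.t.t.\ hypotheses enter. Writing $X_u=\{y:(u,y)\in E\}$ for $u\in U'$, one considers the set $X_0$ of points lying in $\overline{X}_u\setminus int(X_u)$ for some $u\in U'$ and splits into two cases. If $X_0$ is finite, the sets $Z_u=X_u\setminus X_0$ are clopen in $N=(\bigcup_u X_u)\setminus X_0$, and since no point can lie in $k+1$ of the $X_u$ (by the fiber bound), any $k+1$ distinct $Z_{u_i}$ have empty intersection; intersecting the $Z_u$ suitably then manufactures arbitrarily many pairwise disjoint nonempty definable clopen subsets of $N$, contradicting $d_M(N)<\infty$ (Definition \ref{defn:connected-components}, the defining finiteness condition of t.t.t.). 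If $X_0$ is infinite, its interior $W_0$ is nonempty, and using Lemma \ref{lem:Let--be}(4) (each $X_u$ has finitely many boundary points) together with Hausdorffness one inductively produces $u_1,\dots,u_{k+1}\in U'$ with $W_0\cap int(X_{u_1})\cap\dots\cap int(X_{u_{k+1}})\neq\emptyset$, yielding a single $y$ lying in $k+1$ distinct fibers and again contradicting the bound $k$. Without this mechanism (or an equivalent), your outline --- whose outer shell does match Pillay's argument --- is not a proof.

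Two smaller points. First, your opening appeal to ``definable and uniformly bounded'' finiteness is both under-justified and unnecessary: since $|\phi(M_t,a)|=k$ for a concrete $k<\omega$, you may simply replace $\phi(x,y)$ by $\phi(x,y)\wedge\exists^{=k}x'\,\phi(x',y)$, which is first order, so the $A$-definable set $V\ni a$ with constant fiber size comes for free, with no t.t.t.\ input and no compactness. Second, the $A$-definability of $P=\{x: E_x\ \text{infinite}\}$ does require an argument beyond your gesture at Lemma \ref{lem:Let--be}(1),(3): infiniteness of a definable subset of $M_t$ is equivalent to containing a nonempty basic open set avoiding the finite $\emptyset$-definable set of isolated points of $M_t$ (Lemma \ref{lem:Let--be}(2),(3)), and it is this equivalence, quantifying over basis parameters in $M_b$, that makes $P$ definable; this is fixable, but it should be said. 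Neither repair, however, touches the central missing step above.
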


The first type of dimension that we'll look at was introduced by Pillay \cite{P2}.
For this part we'll leave the t.t.t setting and will only need to assume that our structure
has the exchange property.

\begin{defn}\label{def:rank}
  (\emph{rank) } Let $M$ be a structure with the exchange property and $A\subset M$. 
  \begin{enumerate}
  \item For any tuple $\bar{a}\in M^{n}$, $rk(\bar{a}/A)$ is the least cardinality
    of a subtuple $\bar{a}'$ of $\bar{a}$ such that $\bar{a}\in acl(\bar{a}'/A)$.
  \item for any type $p(\bar{x})\in S_{n}(A)$, $rk(p/A)=rk(\bar{a}/A)$ for
    any $\bar{a}\in M^{n}$ realizing $p$.
  \end{enumerate}
\end{defn}

\begin{rem*}\label{rem:rank-rem}
  It's easy to see that the second part of the definition doesn't depend
  on the choice of the element which realizes $p$.
\end{rem*}

The following lemma is immediate but will be used in the next section.

\begin{lem}\label{lem:independent}
  Let $M$ be a structure with the exchange property, $A\subset M$, and
  $\{a_{1},\dots,a_{n}\}\subset M$ an algebraically independent set
  over $A$. In addition, let $b\in M$ have the property that 
  $b\notin acl(\{a_{1},\dots,a_{n}\}/A)$.
  Then $\{a_{1},\dots,a_{n},b\}$ is an algebraically independent set
  over $A$. 
\end{lem}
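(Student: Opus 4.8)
The plan is to unwind the definition of algebraic independence over $A$ and then verify the single new condition that $b$ contributes, letting the exchange property handle the interaction with the existing elements. Recall that a set is algebraically independent over $A$ exactly when no member lies in the algebraic closure of the remaining members together with $A$; here $acl(\cdot/A)$ abbreviates $acl(\cdot\cup A)$, as in Definition \ref{def:rank}. So to show that $\{a_{1},\dots,a_{n},b\}$ is independent over $A$ I must check two things: that $b\notin acl(\{a_{1},\dots,a_{n}\}/A)$, and that for each $i$ we have $a_{i}\notin acl((\{a_{1},\dots,a_{n}\}\setminus\{a_{i}\})\cup\{b\}/A)$. The first is precisely the hypothesis, so all the content sits in the second.

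For the second, I would argue by contradiction. Fix $i$ and write $B=A\cup(\{a_{1},\dots,a_{n}\}\setminus\{a_{i}\})$. Suppose toward a contradiction that $a_{i}\in acl(B\cup\{b\})$. Since $\{a_{1},\dots,a_{n}\}$ is algebraically independent over $A$, we also have $a_{i}\notin acl(B)$. These are exactly the two premises of the exchange property (Definition \ref{defn:exchange}), with the roles of the two distinguished elements played by $b$ and $a_{i}$, so exchange yields $b\in acl(B\cup\{a_{i}\})=acl(\{a_{1},\dots,a_{n}\}/A)$. This contradicts the hypothesis on $b$, and hence no such $i$ exists.

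Combining the two verifications establishes that $\{a_{1},\dots,a_{n},b\}$ is algebraically independent over $A$. I do not anticipate a genuine obstacle here: the only care required is the bookkeeping that rewrites ``independent over $A$'' as the symmetric family of non-membership statements, together with the correct identification of which element plays which role in the exchange axiom. Once that setup is in place, the exchange property does all the work in a single step.
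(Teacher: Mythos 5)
Your proof is correct and follows essentially the same route as the paper's: assume some $a_i$ lies in $acl$ of the remaining elements together with $b$ over $A$, note that independence of $\{a_1,\dots,a_n\}$ gives $a_i\notin acl(\{a_1,\dots,\hat{a_i},\dots,a_n\}/A)$, and apply exchange to conclude $b\in acl(\{a_1,\dots,a_n\}/A)$, contradicting the hypothesis. Your version is slightly more explicit in separating out the case where $b$ itself would be the dependent element, but the core step is identical.
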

\begin{proof}
  Suppose for contradiction that $\{a_{1},\dots,a_{n},b\}$ is not algebraically
  independent over $A$. Then there's some $1\leq i\leq n$ such that
  \[
  a_{i}\in acl(\{a_{1}\dots,\hat{a_{i}},\dots,a_{n},b\}/A)\]
  . By the assumption, $a_{i}\notin acl(\{a_{1}\dots,\hat{a_{i}},\dots,a_{n}\}/A)$
  . But since $M$ has the exchange property, this means that 
  $b\in acl(\{a_{1}\dots,a_{i},\dots,a_{n}\}/A)$ which is a contradiction.
\end{proof}

For now we will assume that $M$ is a structure with the exchange
property which is sufficiently saturated so that the dimension 
of a type doesn't depend on the specific model we're using.

The following lemma was proved by Pillay \cite[1.2]{P2}

\begin{lem}\label{lem:properties}
  Let $M$ be a structure with the exchange property,
  $A,B\subset M$, and $\bar{a}\in M^{s},\bar{b}\in M^{t}$. Then:
  \begin{enumerate}
  \item If $A\subset B$ then $rk(\bar{a}/A)\geq rk(\bar{a}/B)$.
  \item $rk(\bar{a}\mathcircumflex\bar{b}/A)=rk(\bar{a}/A\cup\bar{b})+rk(\bar{b}/A)$.
  \item $rk(\bar{a}/A\cup\bar{b})=rk(\bar{a}/A)\iff rk(\bar{b}/A\cup\bar{a})=rk(\bar{b}/A)$.
  \item If $p\in S_{n}(A)$ and $A\subset B$ then there exists a type $q\in S_{n}(B)$
    such that $p\subset q$ and $rk(q/B)=rk(p/A)$.
  \end{enumerate}
\end{lem}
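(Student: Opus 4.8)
The plan is to recognize $rk(\cdot/A)$ as the dimension function of the pregeometry that $acl$ induces on $M$ once we fix the parameter set $A$, and to derive parts (1)--(3) from the standard combinatorics of such pregeometries, reserving a separate saturation argument for part (4). The engine for everything is a well-definedness lemma that I would prove first: for a finite tuple $\bar{a}$, the minimal cardinality of a subtuple $\bar{a}'$ with $\bar{a}\in acl(\bar{a}'/A)$ (which is $rk(\bar{a}/A)$ by Definition \ref{def:rank}) equals the maximal cardinality of a subtuple of $\bar{a}$ that is algebraically independent over $A$, and moreover any minimal spanning subtuple is itself independent over $A$. This is the Steinitz exchange argument, and the two tools it needs are exactly available: Lemma \ref{lem:independent} lets us enlarge an independent subtuple whenever it fails to span, and the exchange property (Definition \ref{defn:exchange}) forces any two maximal independent subtuples to have the same size. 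After this step $rk(\bar{a}/A)$ is a genuine matroid rank, and every $\bar{a}$ has a \emph{basis over $A$}, i.e.\ a maximal independent, equivalently minimal spanning, subtuple.

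The heart of the lemma is the additivity (2), which I would prove by constructing a basis of $\bar{a}\mathcircumflex\bar{b}$ over $A$ explicitly. Take a basis $\bar{b}_{0}$ of $\bar{b}$ over $A$, so $|\bar{b}_{0}|=rk(\bar{b}/A)$ and $acl(\bar{b}/A)=acl(\bar{b}_{0}/A)$. Then greedily enlarge $\bar{b}_{0}$ by coordinates of $\bar{a}$, invoking Lemma \ref{lem:independent} at each step to keep the growing subtuple independent over $A$, and stop at a subtuple $\bar{a}_{0}\subset\bar{a}$ that is maximal with this property. Maximality together with the idempotence of $acl$ shows that $\bar{b}_{0}\mathcircumflex\bar{a}_{0}$ spans $\bar{a}\mathcircumflex\bar{b}$ over $A$, so it is a basis and $rk(\bar{a}\mathcircumflex\bar{b}/A)=|\bar{b}_{0}|+|\bar{a}_{0}|$. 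It then remains to identify $|\bar{a}_{0}|$ with $rk(\bar{a}/A\cup\bar{b})$: the construction makes $\bar{a}_{0}$ independent over $A\cup\bar{b}_{0}$, hence over $A\cup\bar{b}$ since $acl(\bar{b}/A)=acl(\bar{b}_{0}/A)$, and the same equality shows $\bar{a}_{0}$ spans $\bar{a}$ over $A\cup\bar{b}$; thus $\bar{a}_{0}$ is a basis of $\bar{a}$ over $A\cup\bar{b}$. Substituting gives (2). The bookkeeping that transfers independence and spanning between ``over $A\cup\bar{b}_{0}$'' and ``over $A\cup\bar{b}$'' is the place most likely to hide an error, so I would isolate $acl(\bar{b}/A)=acl(\bar{b}_{0}/A)$ as a small preliminary claim.

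Parts (1) and (3) are then quick. Monotonicity (1) is immediate, since $A\subset B$ gives $acl(\cdot/A)\subset acl(\cdot/B)$, so any subtuple independent over $B$ is independent over $A$; hence a basis of $\bar{a}$ over $B$ is an $A$-independent subtuple of $\bar{a}$ and so has size at most $rk(\bar{a}/A)$. For (3) I would read the statement off additivity: applying (2) to both orderings and using that the rank depends only on the set of coordinates gives $rk(\bar{a}/A\cup\bar{b})+rk(\bar{b}/A)=rk(\bar{b}/A\cup\bar{a})+rk(\bar{a}/A)$, hence $rk(\bar{a}/A)-rk(\bar{a}/A\cup\bar{b})=rk(\bar{b}/A)-rk(\bar{b}/A\cup\bar{a})$; by (1) both sides are nonnegative, so one vanishes iff the other does, which is exactly (3).

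Finally (4) is of a different, saturation-theoretic flavor. Fix $\bar{a}\models p$ and a basis $\bar{a}_{0}$ of $\bar{a}$ over $A$ of size $k=rk(p/A)$, the remaining coordinates lying in $acl(\bar{a}_{0}/A)$. Working inside the sufficiently saturated ambient model, I would build a tuple $\bar{a}_{0}'$ with $tp(\bar{a}_{0}'/A)=tp(\bar{a}_{0}/A)$ that is algebraically independent over $B$, one coordinate at a time: at stage $i$ the relevant type over $A\cup\{a_{1}',\dots,a_{i-1}'\}$ is nonalgebraic because $\bar{a}_{0}$ is independent over $A$, and a nonalgebraic type over a small set has a realization outside $acl(\{a_{1}',\dots,a_{i-1}'\}/B)$ by saturation. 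Completing $\bar{a}_{0}'$ to a full realization $\bar{a}'\models p$ with $tp(\bar{a}'/A)=tp(\bar{a}/A)$, the algebraic coordinates land in $acl(\bar{a}_{0}'/A)\subset acl(\bar{a}_{0}'/B)$, so $rk(\bar{a}'/B)=rk(\bar{a}_{0}'/B)=k$; taking $q:=tp(\bar{a}'/B)$ yields an extension of $p$ with $rk(q/B)=rk(p/A)$. I expect the main obstacle to be the combinatorial core, namely the well-definedness lemma of the first paragraph together with the additivity (2), whereas (1), (3), and (4) are comparatively routine once that core and the standing saturation hypothesis are in place.
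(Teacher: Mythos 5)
The paper gives no proof of this lemma at all---it is quoted verbatim from Pillay \cite[1.2]{P2}---so there is no in-paper argument to compare against; judged on its own merits, your proof is correct. Your route (first establishing via Steinitz exchange that $rk$ is the dimension function of the pregeometry $acl(\cdot\cup A)$, deriving (1)--(3) from basis combinatorics with Lemma \ref{lem:independent} supplying the extension step, and proving (4) by realizing an independent basis over $B$ in a sufficiently saturated elementary extension, consistent with the paper's standing saturation convention) is the standard pregeometry argument and, so far as the citation indicates, essentially Pillay's own.
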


We are now ready to define our first concept of dimension for a structure
with the exchange property.

\begin{defn}\label{defn:rank-of-set}
  Let $M$ be a structure with the exchange property, $X\subset M^{n}$
  a definable subset and $A\subset M$. Then we define:

  \[
  rk(X)=max_{p\in S_{n}(A)}\{rk(p/A)\vert\textrm{p is realized in}\: X\}\]
\end{defn}

\begin{rem}\label{defn:rank-of-set-rem}
  Note that under our assumption that $M$ is sufficiently saturated,
  by part 4 of lemma \ref{lem:properties}, $rk(X)$ doesn't depend
  on the choice of $A$.

  We can make this more explicit by changing the definition and only
  requiring $p$ to be realized $X(N)$ where $N$ is some elementary extension of $M$.
\end{rem}

We'll now give our second definition of dimension. In this definition 
$M$ has to have some definable topology. Therefore,
we'll assume that $M$ is a t.t.t structure.

Furthermore, given a set $X\subset M^{n}$ and indices $1\leq i_{1}<\dots<i_{k}\leq n$,
let $\pi_{i_{1},\dots,i_{k}}(X)$ be the projection of $X$ onto the coordinates
$i_{1},\dots,i_{k}$.

\begin{defn}\label{defn:top-dim}
  Let $M$ be a t.t.t structure and $X\subset M_{t}^{n}$ be a definable
  subset. We define the \emph{topological dimension} of $X$ as:\[
  dim(X)=max_{1\leq k\leq n}\{\exists1\leq i_{1}<\dots<i_{k}\leq n\:\textrm{s.t}\: int(\pi_{i_{1},\dots,i_{k}}(X))\neq\emptyset\}\]

\end{defn}

\section{The Equivalence of the Dimensions}
\label{sec:equiv-dimens}

In this section we'll prove that for any $\omega$-saturated 1-t.t.t structure, 
the two definitions of dimension we gave above agree on all definable sets. 

\begin{lem}\label{lem:existy}
  Let $M$ be a 1-t.t.t structure. Let $\phi(x,y)$ be a formula and $X=\phi^{M_{t}}$. In addition,
  let $U\subset M_{t}$ be a definable open set such that for all $u\in U$,
  $\vert\{y\in M_{t}:(u,y)\in X\}\vert\geq\aleph_{0}$. Then, for every
  $k\in\mathbb{N}$ there exists a $y\in M_{t}$ such that $\vert\{x\in U:(x,y)\in X\}\vert>k$.
\end{lem}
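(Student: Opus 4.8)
The plan is to prove the following slightly stronger statement and then deduce the lemma by a first-order reflection argument: in a sufficiently saturated model there is a single $y\in M_{t}$ whose restricted fiber $\{x\in U:(x,y)\in X\}$ is infinite. First I would observe that for each fixed $k$ the assertion ``$\exists y$ with $\vert\{x\in U:(x,y)\in X\}\vert>k$'' is a first order sentence over the parameter set $A$ defining $X$ and $U$, while the hypothesis ``for all $u\in U$ the fiber $X_{u}$ is infinite'' is the conjunction over $n$ of the first order sentences ``for all $u\in U$, $\vert X_{u}\vert\geq n$'' and so is preserved in any elementary extension. Hence I may replace $M$ by a sufficiently saturated elementary extension $N$: the hypothesis survives, and proving the conclusion in $N$ yields each first order instance, which then transfers back down to $M$. (We may assume $U\neq\emptyset$, otherwise there is nothing to prove.)

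Working now in $N$ with $A$ the defining parameters, the heart of the argument is the exchange property of $M_{t}$ (Theorem \ref{thm:ttt-exchange}) together with the symmetry of rank in Lemma \ref{lem:properties}(3). Since $U$ is a nonempty open set it is infinite, so I may choose $u\in U$ with $u\notin acl(A)$, i.e. $rk(u/A)=1$. The hypothesis makes the vertical fiber $X_{u}$ infinite and definable over $A\cup\{u\}$, so I may likewise choose $y\in X_{u}$ with $y\notin acl(A\cup\{u\})$. Then $rk(y/A\cup\{u\})=1$, and since $acl(A)\subset acl(A\cup\{u\})$ we also have $y\notin acl(A)$, so $rk(y/A)=1=rk(y/A\cup\{u\})$. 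Applying Lemma \ref{lem:properties}(3) with the roles $\bar{a}=y$, $\bar{b}=u$ flips this to $rk(u/A\cup\{y\})=rk(u/A)=1$, that is, $u\notin acl(A\cup\{y\})$.

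To conclude, consider the restricted horizontal fiber $U\cap\{x:(x,y)\in X\}$, which is definable over $A\cup\{y\}$ and contains $u$. Were it finite it would be contained in $acl(A\cup\{y\})$, contradicting $u\notin acl(A\cup\{y\})$; hence it is infinite. This is exactly the strengthened statement, and the lemma follows by the reflection described above.

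Two routine ingredients remain: that a finite definable set is contained in the algebraic closure of its parameters, and that an infinite definable subset of $M_{t}$ (over parameters $A\cup\{u\}$, resp. nothing beyond $A$) contains a point outside that algebraic closure. The former is standard; the latter is where saturation enters, using that in a sufficiently saturated $N$ an infinite definable set has strictly more elements than the algebraic closure of its parameters, and it can alternatively be extracted from Lemma \ref{lem:Let--be}(3), which exhibits such a set as a finite set together with nonempty definably connected open pieces. I expect the main obstacle to be the bookkeeping of the passage to $N$ — checking that the hypothesis survives the extension and that the finitary conclusion reflects back — rather than the exchange computation itself, which is short once the genericity of $u$ and of $y$ in its fiber is in hand.
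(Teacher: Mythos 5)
Your proof takes a genuinely different route from the paper's. The paper re-runs Pillay's topological argument from \cite[6.7]{P1}: assuming the conclusion fails for some $k$, it analyzes the set $X_0$ of fiber boundary points and derives a contradiction either with $d_M(\cdot)<\infty$ (when $X_0$ is finite) or with the bound $k$ on horizontal fibers (when $X_0$ is infinite). You instead quote the exchange property itself (Theorem \ref{thm:ttt-exchange}) as a black box, pass to a sufficiently saturated elementary extension $N$, choose $u\in U$ generic over the parameter set $A$ and $y$ in the fiber over $u$ generic over $A\cup\{u\}$, use rank symmetry (Lemma \ref{lem:properties}(3)) to conclude $u\notin acl(A\cup\{y\})$, and deduce that the horizontal fiber through $y$ meets $U$ in an infinite set. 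Your two transfer directions are handled correctly: the hypothesis is a conjunction of first-order sentences (so it goes up), and each instance of the conclusion is first-order (so it comes down). This is not circular---exchange is Pillay's theorem, external to this paper, even though the paper's own proof of this lemma is modelled on the proof of exchange---and it is much shorter; what the paper's longer argument buys is that it works inside $M$ itself, with no saturation, no elementary extension, and no preservation theorem.

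That last point is where your proof has a genuine gap: you apply exchange and the rank calculus in $N$, not in $M$. Theorem \ref{thm:ttt-exchange} applies to 1-t.t.t structures, and you never show that $N$ is 1-t.t.t. This is not automatic: exchange in $M$ does not formally imply exchange in $N\succ M$, because a violating configuration in $N$ involves non-algebraicity conditions, which are infinitary and need not reflect down into a non-saturated $M$. The paper hits exactly this issue in the proof of Theorem \ref{thm:rank=dim} and patches it by citing \cite[14]{D} for the fact that an elementary extension of a 1-t.t.t structure is again 1-t.t.t; your proof needs the same citation to go through. Alternatively, since every application of this lemma in the paper (Proposition \ref{prop:2-d-nonempty-int} onwards) assumes $M$ is $\omega$-saturated, you could add that hypothesis and avoid $N$ entirely: $\omega$-saturation realizes your two genericity types inside $M$ itself (they are partial types over finite parameter sets, finitely satisfiable because $U$ and the fiber are infinite), and exchange is then applied in $M$, where Theorem \ref{thm:ttt-exchange} licenses it.

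One further caveat, shared with the paper: your claim that a nonempty definable open $U$ must be infinite is not justified, since by Lemma \ref{lem:Let--be}(2) the set $M_t$ may contain (finitely many) isolated points, so $U$ could be finite; in that case the lemma as stated can actually fail (take $U$ a single isolated point and $X=U\times M_t$). The paper's own proof also tacitly requires $U$ to be infinite, so this is a defect of the statement rather than of your argument specifically.
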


The proof of this lemma is nearly identical to Pillay's proof of the exchange
property in \cite[6.7]{P1} but is modified for our purposes.
For convenience we give the complete proof here.
  
\begin{proof}
  Let's assume for contradiction that there
  exists a $k\in\mathbb{N}$ such that for all $y\in M_{t}$, $\vert\{x\in U:(x,y)\in X\}\vert\leq k$.
  For all $u\in U$ we'll define $X_{u}=\{y\in M:(u,y)\in X\}$.
  $U$ and $X$ are definable and therefore $X_{u}$ is definable as
  well. In addition, we know that for all $u\in U$, $\vert X_{u}\vert\geq\aleph_{0}$.
  So according to lemma \ref{lem:Let--be}, $X_{u}$ contains an open
  set. We now define another set: \[
  X_{0}=\{c\in M_{t}:c\in\overline{X}_{u}\backslash int(X_{u})\:\textrm{for some}\: u\in U\}\]

  First we'll assume that $X_{0}$ is finite and reach a contradiction.
  Since $\vert\{u\in U:(u,y)\in X\}\vert\leq k$ for all $y\in X_0$,
  we have the following: 

  \medskip{}

  ({*}) for only a finite number of $u\in U$ there exists a $c\in X_{0}$
  such that $(u,c)\in X$. 

  \medskip{}

  Let's define $N=(\cup_{u\in U}X_{u})\backslash X_{0}$ and for all
  $u\in U$, $Z_{u}=X_{u}\cap N=X_{u}\backslash X_{0}$. By ({*}), there're
  an infinite number of $u\in U$ such that $Z_{u}\neq\emptyset$. We'll
  now show that for each $u\in U$, $Z_{u}$ is clopen in $N$. First
  of all, $Z_{u}$ is open in $M_{t}$ and therefore it's also open
  in $N$. In addition, if $c$ is a boundary point
  of $Z_{u}$ in $N$ then it's a boundary point of $Z_{u}$ and therefore
  also a boundary point of $X_{u}$. But that means that $c\in X_{0}$
  which is a contradiction to the definition of $N$.

  Now, by our assumption for contradiction, for any distinct $u_{1},\dots,u_{k+1}\in U$, 

  \medskip{}

  ({*}{*}) $\cap_{i=1}^{k+1}Z_{u_{i}}=\emptyset$

  \medskip{}

  We now show that for any $n\in\mathbb{N}$, we can find $n$ clopen
  definable disjoint sets $V_{1},\dots,V_{n}$ where each $V_{i}$ is
  of the form $Z_{u_{1}}\cap\dots,\cap Z_{u_{m}}$ for some $u_{1},\dots,u_{m}\in U$.
  Let $\tilde{U}=\{u\in U:Z_{u}\neq\emptyset\}$. As we mentioned above,
  $\tilde{U}$ is infinite. For $n=1$, We can define $V_{1}=Z_{u}$
  for any $u\in\tilde{U}$. Let's assume that we've already found sets
  $V_{1},\dots,V_{n}$ with the properties mentioned above. We choose
  some $u_{1}\in\tilde{U}$ that isn't used in the definition of any
  of the $V_{i}$. We define $V_{n+1}^{1}=Z_{u_{1}}$. We now construct
  a sequence $V_{n+1}^{i}$ , $1\leq i\leq k$, inductively. We already
  have $V_{n+1}^{1}$. Let's say that we've defined $V_{n+1}^{i}$.
  If there exists some $u_{i+1}\in\tilde{U}$ such that $V_{n+1}^{i}\cap Z_{u_{i+1}}\neq\emptyset$
  then we define $V_{n+1}^{i+1}=V_{n+1}^{i}\cap Z_{u_{i+1}}$. Otherwise,
  we define $V_{n+1}^{i+1}=V_{n+1}^{i}$. We now define $V_{n+1}=V_{n+1}^{k}$.
  According to ({*}{*}), the sequence $V_{1},\dots,V_{n+1}$ now has
  the required properties. But this is a contradiction to the fact that
  $d(N)\in\mathbb{N}$.

  Now we assume that $X_{0}$ is infinite. Let $W_{0}$ be the interior
  of $X_{0}$. For each $u\in U$, let $W_{u}=intX_{u}$. We'll now
  inductively find a sequence $u_{1},u_{2},\dots\in U$ such that for
  all $n\in\mathbb{N}$, 

\[
  W_{0}\cap W_{u_{1}}\cap\dots\cap W_{u_{n}}\neq\emptyset\]
  
  For $n=0$ there's nothing to show. Let's assume that we've found
  some sequence $u_{1,}\dots,u_{n}\in U$ with the desired property. We choose
  an element $c\in W_{0}\cap\dots\cap W_{u_{n}}$. Since $c\in X_{0}$, there
  exists some point $u\in U\backslash\{u_{1},\dots,u_{n}\}$ such that $c$
  is a boundary point of $X_{u}$. But $X_{u}$ has a finite number
  of boundary points and so by the Hausdorffness of $M_{t}$, every
  neighborhood of $c$ contains points in the interior of $X_{u}$.
  Specifically, $(W_{0}\cap\dots\cap W_{u_{n}})\cap W_{u}\neq\emptyset$
  so we can set $u_{n+1}=u$. Now we choose some $y\in W_{0}\cap\dots\cap W_{u_{k+1}}.$
  This means that $(y,u_{i})\in X$ for all $1\leq i\leq k+1$ which
  is a contradiction to our assumption on $X$.
\end{proof}

\begin{prop}\label{prop:2-d-nonempty-int}
  Suppose that $M$ is an $\omega$-saturated 1-t.t.t structure. 
  Let $\phi(x,y)$ be a formula, $X=\phi^{M_{t}}$, 
  and $U\subset M_{t}$ an open definable subset such that 
  \[\vert\{y\in M_{t}:(u,y)\in X\}\vert\geq\aleph_{0}\]
  for all $u\in U$. Then $X\cap(U\times M_{t})$ has a non-empty interior.
\end{prop}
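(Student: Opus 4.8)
The plan is to reduce the statement to a one-dimensional ``slice'' by a saturation argument, and then to \emph{spread} that slice into a genuine open box in $M_t^2$.

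First I would use Lemma~\ref{lem:existy} together with $\omega$-saturation to produce a single $y_0$ with an infinite fiber over $U$. Let $\bar c$ be the finite tuple of parameters appearing in $\phi$ and in a defining formula for $U$. For each $k$ let $\psi_k(y)$ assert that there are at least $k$ distinct $x$ with $x\in U$ and $\phi(x,y)$; this is a formula over $\bar c$, and Lemma~\ref{lem:existy} says each $\psi_k$ is satisfiable. Since $\psi_{k+1}\vdash\psi_k$, the set $\{\psi_k:k\in\mathbb N\}$ is a finitely satisfiable partial type over $\bar c$, so by $\omega$-saturation it is realized by some $y_0$, giving that $F:=\{x\in U:(x,y_0)\in X\}$ is infinite. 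As $F$ is definable and infinite, Lemma~\ref{lem:Let--be}(3) produces a nonempty definable open $V\subseteq F\subseteq U$, whence $V\times\{y_0\}\subseteq X$.

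The core step is to show this slice is not isolated in the $y$-direction. Because the topology on $M_t$ has a definable basis, membership in the interior of a definable family is itself a definable condition; in particular $E:=\{y\in M_t:int(\{x\in V:(x,y)\in X\})\neq\emptyset\}$ is definable, and $y_0\in E$ since the $y_0$-section equals $V$. I claim $E$ is infinite. If not, then for $y\notin E$ the section $\{x\in V:(x,y)\in X\}$ has empty interior, hence is finite by Lemma~\ref{lem:Let--be}(3); as these sections form a definable family of finite sets, $\omega$-saturation yields a uniform bound $N$ on their sizes. On the other hand, for every $x\in V\subseteq U$ the fiber $\{y:(x,y)\in X\}$ is infinite, so $\{y\notin E:(x,y)\in X\}$ is infinite ($E$ being finite). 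Applying Lemma~\ref{lem:existy} to the definable set $X\cap\big(V\times(M_t\setminus E)\big)$ with open set $V$ produces, for $k=N$, some $y\notin E$ with $|\{x\in V:(x,y)\in X\}|>N$, a contradiction. Hence $E$ is infinite, and by Lemma~\ref{lem:Let--be}(3) it contains a nonempty open $W$, so that for every $y\in W$ the $V$-section of $X$ has nonempty interior.

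It then remains to upgrade ``every section over $W$ has interior'' to one honest box. For a basic open set $B=\phi^{M_t}(\cdot,\bar a)$ with $\emptyset\neq B\subseteq V$, put $E_B:=\{y:\forall x(x\in B\to(x,y)\in X)\}$, which is definable; if some $E_B$ is infinite it contains an open $W'$, and then $B\times W'\subseteq X$ gives the required interior. Each $y\in W$ lies in some $E_B$, so $W\subseteq\bigcup_B E_B$. I expect this final uniformization to be the hard part: $\omega$-saturation only shows that if every $E_B$ were finite then their sizes are uniformly bounded, and a definable family of uniformly finite sets can nevertheless cover the infinite set $W$ (as the graph of a function does), so counting alone does not conclude. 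Breaking this requires genuinely topological input rather than cardinality — in effect the Baire-category phenomenon underlying Proposition~\ref{prop:dense-is-comeager} — to force a single $B$ with $int(E_B)\neq\emptyset$; once that is available, the box $B\times int(E_B)\subseteq X\cap(U\times M_t)$ finishes the proof.
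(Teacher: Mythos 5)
Your first step coincides with the paper's own first claim, and your argument that $E$ is infinite is correct (empty interior plus Lemma \ref{lem:Let--be} gives finiteness of the sections, saturation gives the uniform bound $N$, and Lemma \ref{lem:existy} breaks it). But the proof has a genuine gap, which you yourself flag: you never produce a single basic open $B$ with $int(E_B)\neq\emptyset$, and the covering set-up $W\subseteq\bigcup_B E_B$ cannot be closed off by counting, for exactly the reason you give. Appealing to the ``Baire-category phenomenon underlying Proposition \ref{prop:dense-is-comeager}'' would moreover be circular: in the paper that proposition is a downstream consequence of the present one, via Lemma \ref{lem:has-interior} and Corollary \ref{cor:complement-interior}.

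The missing idea is to change which variables you saturate over. Instead of first fixing the slice and then asking which $y$ serve all of it, iterate your own step 1 with shrinking basic opens: having found a basic open $B_n\subseteq U$ and distinct $c_1,\dots,c_n$ with $B_n\times\{c_i\}\subseteq X$ for all $i$, pass to $\tilde X=\{(x,y)\in X: y\neq c_1,\dots,y\neq c_n\}$, apply the first claim to $\tilde X$ over the open set $B_n$ to get a new $c_{n+1}$ whose section over $B_n$ is infinite, and extract a basic open $B_{n+1}\subseteq B_n$ contained in that section; the old $c_i$ still work on $B_{n+1}$ precisely because $B_{n+1}\subseteq B_n$. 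This shows that for every $n$ the formula $\gamma_n(\bar x)$, asserting of the basis parameters $\bar x\in M_b^k$ that the basic set $\beta(\cdot,\bar x)$ lies in $U$ and admits $n$ distinct elements $y$ with $\beta(\cdot,\bar x)\times\{y\}\subseteq X$, is satisfiable in $M$. Now apply $\omega$-saturation to the type $\{\gamma_n(\bar x):n<\omega\}$ in the sort $M_b$: a single tuple $\bar b$ yields a basic open $B\subseteq U$ for which $C=\{y: B\times\{y\}\subseteq X\}$ is infinite, hence has nonempty interior $C_0$, and $B\times C_0\subseteq X\cap(U\times M_t)$ is the required open box. In short, the uniformization you correctly identified as the hard part is obtained not by topological input but by nesting the witnesses so that each finite stage is carried by one basic set, and then saturating over the basis sort rather than over $y$.
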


\begin{proof}
  Let $\alpha(x)$ be the formula in $M$ defining $U$.
  \begin{claim*}
    There exists a $y\in M_{t}$ such that $\vert(M_{t}\times\{y\})\cap X\cap(U\times M_{t})\vert\ge\aleph_{0}$.
  \end{claim*}
  \begin{proof}
    Let $n\leq\omega$. According to lemma \ref{lem:existy} there exists
    some $c\in M_{t}$ such that $\vert(M_{t}\times\{y\})\cap X\vert\geq n$.
    So if we define\[
    \psi_{n}(y)=\exists x_{1}\dots\exists x_{n}((\bigwedge_{i\neq j}x_{i}\neq x_{j})\wedge(\bigwedge_{i}(\alpha(x_{i})\wedge\phi(x_{i},y)))\]
    then $M\vDash\psi_{n}[c]$. Since $M$ is $\omega$-saturated, there
    exists some $d\in M_{t}$ such that $M\vDash\psi_{n}[d]$ for all
    $n<\omega$. Therefore, $\vert(M_{t}\times\{d\})\cap X\cap(U\times M_{t})\vert\ge\aleph_{0}$
    which completes the claim.
  \end{proof}
  \begin{claim*}
    There exists an open definable set $V\subset U$ and an infinite number
    of elements $y\in M_{t}$ such that for all $v\in V$, $(v,y)\in X$.
  \end{claim*}
  \begin{proof}
    By the definition of a t.t.t, there exists some formula $\beta(x,y_{1},\dots,y_{k})$
    such that $\{\beta^{M_{t}}(x,\bar{a})\vert\bar{a}\in M_{b}^{k}\}$
    is a basis for the topology on $M_t$. We first show that for every
    $n<\omega$:

    \medskip{}

    ({*}{*}{*}) there exists a tuple $\overline{b}_{n}\in M_{b}^{k}$ and distinct elements $c_{1},\dots,c_{n}\in M_{t}$
    such that if $B_{n}=\beta^{M_{t}}[\overline{b}_{n}]$ then $B_{n}\subset U$
    and $(u,c_{i})\in X$ for all $u\in B_{n}$ and all $1\leq i\leq n$.

    \medskip{}

    According to the first claim there exists a $c_{1}\in M_{t}$ such
    that the definable set $(M_{t}\times\{c_{1}\})\cap X\cap(U\times M_{t})$
    is infinite. Therefore, its projection onto $U$ is infinite so there's
    some $\overline{b}_{1}\in M_{b}^{k}$ such that $B_{1}=\beta^{M_{t}}[\overline{b}_{1}]\subset U$
    is contained in the projection. This means that $(u,c_{1})\in X$
    for all $u\in B_{1}$ . This shows that ({*}{*}{*}) is true for $n=1$. 

    Now let's assume that ({*}{*}{*}) is true for $n\in\mathbb{N}$. The
    set \[
    \tilde{X}=\{(x,y)\in X:\forall1\leq i\leq n,y\neq c_{i}\}\]
    and the open definable set $B_{n}$ fulfill the conditions of the
    prior claim (where $\tilde{X}$ is instead of $X$ and $B_{n}$ is
    instead of $U$). This means that we can find an element $c_{n+1}\in M_{t}$
    such that \[
    \vert(M_{t}\times\{c_{n+1}\})\cap\tilde{X}\cap(B_{n}\times M_{t})\vert\ge\aleph_{0}\]
    
    So exactly like in the case of $n=1$, there exists some $\overline{b}_{n+1}\in M_{b}^{k}$
    such that $B_{n+1}=\beta^{M}[\overline{b}_{n+1}]\subset B_{n}$ and
    $(u,c_{n+1})\in\tilde{X}\subset X$ for all $u\in B_{n+1}$. Also,
    by the definition of $\tilde{X}$, $c_{n+1}\neq c_{i}$ for all $1\leq i\leq n$.
    Finally, since $B_{n+1}\subset B_{n}$, $(u,c_{i})\in X$ for all
    $u\in B_{n+1}$ and all $1\leq i\leq n+1$. So we showed that ({*}{*}{*})
    holds for all $n<\omega$.

    Therefore, if we define the formula:\[
    \gamma_{n}(\overline{x})=\exists c_{1}\dots\exists c_{n}((\bigwedge_{i\neq j}c_{i}\neq c_{j})\wedge(\forall u(\beta(u,\overline{x})\rightarrow((\bigwedge_{i}\phi(u,c_{i}))\wedge\alpha(u)))))\]
    then for each $n<\omega$ there exists a tuple $\overline{b}\in M_{b}^{k}$
    such that $M\vDash\gamma_{n}[\overline{b}]$. But $M$ is $\omega$-saturated
    so there is some $\overline{b}\in M_{b}^{k}$ such that $M\vDash\gamma_{n}[\overline{b}]$
    for all $n<\omega$, i.e, if $B=\beta^{M_{t}}[\overline{b}]$ then
    $B\subset U$ and the set $C=\{y\in M_{t}:\forall u\in B,(u,y)\in X\}$
    is infinite. This finishes the proof of the claim.
  \end{proof}

  Now, let $B$ and $C$ be the sets defined in the end of the proof
  of the second claim. Let $C_{0}$ be the (non empty) interior of $C$.
  Then by the definition of $C$, $B\times C_{0}\subset X\cap(U\times M_{t})$
  is open and which completes the proof of the proposition.
\end{proof}

\begin{lem}\label{lem:has-interior}
  Suppose $M$ is an $\omega$-saturated 1-t.t.t structure. Let $X\subset M_{t}^{n+1}$ be
  a definable subset such that $\pi_{1,\dots,n}(X)$ is a basis set
  in the product topology on $M_{t}^{n}$. If for all $\bar{x}\in \pi_{1,\dots,n}(X)$,
  $\vert(\{\bar{x}\}\times M_{t})\cap X\vert=\infty$, then $X$ has a
  non-empty interior.
\end{lem}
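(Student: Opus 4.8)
The plan is to argue by induction on $n$, anchoring everything on Proposition \ref{prop:2-d-nonempty-int} as the base case. When $n=1$ we have $X\subset M_t^2$ with $U:=\pi_{1}(X)$ a basis set of $M_t$, hence open and definable, and every fibre $(\{x\}\times M_t)\cap X$ infinite for $x\in U$. Since $X\subset U\times M_t$, Proposition \ref{prop:2-d-nonempty-int} applied with this $U$ shows $X=X\cap(U\times M_t)$ has non-empty interior. For the inductive step, writing a point of $M_t^{n+1}$ as $(x_1,\bar w)$ with $\bar w\in M_t^n$, each slice $X^a=\{\bar w:(a,\bar w)\in X\}$ (for $a\in\pi_{1}(X)=:B_1$) has $\pi_{1,\dots,n-1}(X^a)$ a basis set of $M_t^{n-1}$ and infinite last-coordinate fibres, so by the induction hypothesis every $X^a$ has non-empty interior. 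The real content is therefore a \emph{uniformity} statement: over the one-dimensional base $B_1$, a definable family whose every fibre has interior must contain a full product box. I expect this alignment --- the interiors $int(X^a)$ a priori live in completely different regions of $M_t^n$ as $a$ varies --- to be the main obstacle, and it is the multi-dimensional analogue of the inner claims of Proposition \ref{prop:2-d-nonempty-int}.

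To keep the induction non-circular I would run, alongside the lemma $S(n)$, its ``transpose'' $U(n)$: a definable $X\subset B\times M_t^{n}$ over a basis set $B\subset M_t$, all of whose fibres over $B$ have non-empty interior, contains a product box. Note $U(1)$ is again just Proposition \ref{prop:2-d-nonempty-int}. The key engine is a \emph{seed} step: given any sub-box $B'$ of the base on which the distinguished last coordinate still has infinite fibres, I want to produce a smaller sub-box $B''\subset B'$ carrying at least one common last-coordinate value $c$, i.e. $B''\times\{c\}\subset X$. I would obtain this by \emph{pinning} the last coordinate: let $\Phi$ be the definable set of pairs $(x_1,y)$ for which the intermediate-coordinate slice $\{(x_2,\dots,x_n):(x_1,x_2,\dots,x_n,y)\in X\}$ has interior in $M_t^{n-1}$. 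The induction hypothesis $S(n-1)$ makes every fibre $\Phi^{x_1}$ infinite, so Proposition \ref{prop:2-d-nonempty-int} gives a box $B_1^{\dagger}\times C^{\dagger}\subset\Phi$; fixing a value $c\in C^{\dagger}$ leaves, for all $x_1\in B_1^{\dagger}$, an intermediate slice with interior, and $U(n-1)$ then supplies the sub-box $B''$ with $B''\times\{c\}\subset X$.

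I would then iterate the seed: starting from $B$ and repeatedly applying it to $X$ with the finitely many already-chosen values deleted from the last coordinate (so that the distinguished fibres stay infinite), I obtain for every $N$ a sub-box carrying $N$ distinct common values. Encoding ``the box coded by $\bar d$ carries at least $N$ common values'' as a formula $\gamma_N(\bar d)$ in the basis parameters, exactly as in the last claim of Proposition \ref{prop:2-d-nonempty-int}, $\omega$-saturation produces a single sub-box $B^{*}$ for which the definable set $C=\{y:B^{*}\times\{y\}\subset X\}$ is infinite. Here the one-dimensionality of the distinguished coordinate is essential: by Lemma \ref{lem:Let--be} an infinite definable subset of $M_t$ has non-empty interior $C_0$, whence $B^{*}\times C_0\subset X$ is the required box. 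This proves $S(n)$.

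Finally I would close the simultaneous induction by deducing $U(n)$ from $U(n-1)$ and the just-proved $S(n)$, again by pinning: for $X\subset B\times M_t^{n}$ with all fibres having interior, let $\Psi$ be the set of $(x_1,z_1,\dots,z_{n-1})$ over which the last fibre-coordinate $z_n$ has infinite fibre; each fibre of $\Psi$ has interior in $M_t^{n-1}$, so $U(n-1)$ gives a box $B_1^{*}\times Z'$ over which $z_n$ has infinite fibres, and then $S(n)$ applied to $X\cap(B_1^{*}\times Z'\times M_t)$ --- whose base $B_1^{*}\times Z'$ is an $n$-dimensional basis set with infinite last-coordinate fibres --- yields the product box. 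The delicate point throughout, and the place where care is needed, is to preserve the infiniteness of the distinguished coordinate's fibres while pinning the remaining ones, since that is what lets one re-enter Proposition \ref{prop:2-d-nonempty-int} and the lower-dimensional hypotheses at each stage.
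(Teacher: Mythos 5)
Your proposal is correct and runs on the same engine as the paper's proof: produce a sub-box carrying one common last-coordinate value, delete that value and iterate so that for every $N$ some sub-box carries $N$ common values, invoke $\omega$-saturation (boxes being coded by tuples from $M_{b}$) to obtain a single box $B^{*}$ whose set $C=\{y\in M_{t}:B^{*}\times\{y\}\subset X\}$ is infinite, and finish by one-dimensionality (Lemma \ref{lem:Let--be}): $C$ has non-empty interior $C_{0}$ and $B^{*}\times C_{0}\subset X$. Where you genuinely differ is the organization of the seed step, and hence of the induction. The paper needs only the lemma itself as inductive hypothesis: it forms the set of tuples in $\pi_{2,\dots,n+1}(X)$ whose \emph{first}-coordinate fibre is infinite, shows via Proposition \ref{prop:2-d-nonempty-int} that this set has infinite last-coordinate fibres over $\pi_{2,\dots,n}(X)$, applies the inductive hypothesis to it to pin a value $x_{1}$ (getting a box $V$ with $V\times\{x_{1}\}$ inside it), and then applies the inductive hypothesis a second time, after permuting coordinates, to the pinned set $\{(u,\bar{v}):(u,\bar{v},x_{1})\in X\}$, which again has one-dimensional fibres over the $(n-1)$-dimensional box $V$. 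You instead pin the value through your set $\Phi$ of first/last coordinate pairs whose middle slice has interior, which leaves a family over a \emph{one}-dimensional base with $(n-1)$-dimensional fibres having interior; that situation is not a coordinate permutation of the lemma, so you are forced into the mutual induction with the transpose statement $U(n)$. Both schemes close without circularity; the paper's slicing is leaner, since coordinate permutation does the work of your $U(n)$, while yours isolates the uniformity obstacle explicitly and proves the extra statement $U(n)$ --- a definable family over a basis set of $M_{t}$ all of whose fibres have interior contains a product box --- as a byproduct of independent interest. Note also that both arguments rest on the same tacitly used technical points: that ``the fibre is infinite'' and ``the slice has interior'' are definable conditions (true here via the basis formula and Lemma \ref{lem:Let--be}), and that the non-empty open subsets of $M_{t}$ produced along the way are infinite, i.e. miss the finitely many isolated points of $M_{t}$.
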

\begin{proof}
  We'll use induction on $n$.

  For $n=1$, this lemma follows proposition \ref{prop:2-d-nonempty-int}.

  Let's assume that the claim is true for $n-1$. We define $A=\pi_{2,\dots,n}(X)$
  and $B=\pi_{2,\dots,n+1}(X)$.

  In addition, we define the set:\[
  C=\{\bar{b}\in B:\:\vert(M_{t}\times\{\bar{b}\})\cap X\vert=\infty\}\]

  By proposition \ref{prop:2-d-nonempty-int}, for every tuple $\bar{a}\in A$
  the set $(M_{t}\times\{\bar{a}\}\times M_{t})\cap X$ has a non-empty
  interior. In particular, this means that 
  \[
  \vert(\{\bar{a}\}\times M_{t})\cap C\vert=\infty\]

  \begin{claim*}
    There exists a basis set $U\subset\pi_{1,\dots,n}(X)$ such that 
    \[
    \vert\{x\in M_{t}:\: U\times\{x\}\subset X\}\vert=\infty\]
  \end{claim*}
  \begin{proof}
    As we showed above, for every tuple $\bar{a}\in A$,
    \[
    \vert(\{\bar{a}\}\times M_{t})\cap C\vert=\infty\]
    
    By the inductive hypothesis, there exists a basis set $V\subset A$
    and a point $x_{1}\in M_{t}$ such that $V\times\{x_{1}\}\subset C$.
    By the definition of $C$ and another application of the inductive hypothesis,
    $(M_{t}\times V \times\{x_1\})\cap X$
    has a non-empty interior. Therefore, there exists a basis set $U_{1}\subset\pi_{1,\dots,n}(X)$
    such that $U_{1}\times\{x_{1}\}\subset X$.

    Now, lets define the set
    \[
    X_{2}=[(U_{1}\times M_{t})\cap X]\backslash[U_1\times\{x_{1}\}]\]
    
    Since we only removed a finite
    number of elements from each fiber of $U_1$, $X_{2}$ has the properties
    required by the proposition. This means that we can repeat the above
    process again and obtain a basis set $U_{2}\subset U_1=\pi_{1,\dots,n}(X_{2})$
    and an element $x_{2}\in M_{t}$ such that $x_1\neq x_2$ and 
    \[
    U_{2}\times\{x_{2}\}\subset X_2\subset X\]

    Furthermore, since $U_{2}\subset U_{1}$, we also have 
    \[U_{2}\times\{x_{1}\}\subset X\]
    Therefore: 
    \[
    \vert\{x\in M_{t}:\: U_{2}\times\{x\}\subset X\}\vert\geq2\]

    By continuing this process $n$ times, we can find a basis
    set $U_{n}\subset\pi_{1,\dots n}(X)$ such that:

    \[
    \vert\{x\in M_{t}:\: U_{n}\times\{x\}\subset X\}\vert\geq n\]

    Since $M$ is $\omega$-saturated and basis sets are definable with
    a tuple of constants from $M_{b}$, there exists a basis set
    $U\subset\pi_{1,\dots n}(X)$ such that:

    \[
    \vert\{x\in M_{t}:\: U\times\{x\}\subset X\}\vert=\infty\]

  \end{proof}
  Let $U$ be the basis set given by the claim. Since $M$
  is 1-t.t.t, there exists a basis set $W\subset M$ such that for all
  $w\in W$, $U\times\{w\}\subset X$.
  Therefore, $U\times W\subset X$ which finishes the induction and
  proves the lemma.
\end{proof}

Before proceeding to prove the the theorem about the equivalence of
the dimensions, we use lemma \ref{lem:has-interior} to obtain an
interesting corollary.

\begin{cor}\label{cor:complement-interior}
  Suppose $M$ is an $\omega$-saturated 1-t.t.t structure. Let $X\subset M_{t}^{n}$ and
  $Y\subset X$ be definable sets. If $X$ has an interior in $M_{t}^{n}$
  and $Y$ does not, then $X\backslash Y$ has an interior in $M_{t}^{n}$.
\end{cor}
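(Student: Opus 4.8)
The plan is to reduce the corollary to a cleaner statement about boxes and then prove that by induction on $n$, using Lemma~\ref{lem:has-interior} as the engine. Since $X$ has an interior, $\mathrm{int}(X)$ contains a basis (box) set $B=B_{1}\times\dots\times B_{n}$ with each $B_{i}\subset M_{t}$ a basic open set. Replacing $X$ by $B$ and $Y$ by $P:=Y\cap B$ (whose interior is still empty, being contained in $\mathrm{int}(Y)$) and noting that $B\setminus P=B\setminus Y$, it suffices to prove the following \emph{box statement}: if $B\subset M_{t}^{n}$ is a box and $P\subset B$ is a definable set with empty interior, then $B\setminus P$ has non\hyp empty interior; any open box found inside $B\setminus P$ lies in $X\setminus Y$. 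Equivalently, a box cannot be split into two definable pieces each of which has empty interior --- a Baire\hyp category phenomenon matching the theme of Proposition~\ref{prop:dense-is-comeager}.

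First I would dispose of degenerate factors and the base case. If some factor $B_{i}$ is finite, then $B_{i}\subset M_{t}$ is a finite open set, so it consists of isolated points and each singleton $\{p\}\subset B_{i}$ is clopen; slicing $B$ along these clopen singletons and using that $\{p\}$ is open lets me identify $\mathrm{int}$ in $M_{t}^{n}$ of each slab with $\mathrm{int}$ of its remaining factors in $M_{t}^{n-1}$, so the inductive hypothesis on the slabs gives the result and reduces $n$. Hence I may assume every $B_{i}$ is infinite. For $n=1$, a definable $P\subset B_{1}$ with empty interior is finite by Lemma~\ref{lem:Let--be}(3), so $B_{1}\setminus P$ is infinite and therefore has non\hyp empty interior, again by Lemma~\ref{lem:Let--be}(3).

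For the inductive step, write $B=B_{1}\times B'$ with $B'\subset M_{t}^{n-1}$ a box, and for $\bar x'\in B'$ set $P_{\bar x'}=\{x_{1}\in B_{1}:(x_{1},\bar x')\in P\}$. Consider $W=\{\bar x'\in B':B_{1}\setminus P_{\bar x'}\text{ is finite}\}$; this is definable because the finite set $I$ of isolated points of $M_{t}$ is definable (Lemma~\ref{lem:Let--be}(2)) and, for a definable $A\subset M_{t}$, finiteness of $A$ is equivalent to $A\setminus I$ having empty interior, which is expressible with the basis formula. The key claim is that $W$ has empty interior. If not, there is a box $V''\subset W$, and for every $\bar x'\in V''$ we have $P_{\bar x'}\supseteq B_{1}\setminus(\text{finite})$, hence $P_{\bar x'}$ is infinite since $B_{1}$ is infinite; then $P\cap(B_{1}\times V'')$ has all fibres over the first coordinate infinite and projects onto the basis set $V''$, so Lemma~\ref{lem:has-interior} (applied after permuting coordinates so the distinguished coordinate is last) forces $P$ to have an interior, a contradiction. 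Thus $W$ has empty interior, and the inductive hypothesis applied to the box $B'$ and $W$ yields a box $V'\subset B'\setminus W$. By definition of $W$, for each $\bar x'\in V'$ the fibre $B_{1}\setminus P_{\bar x'}$ is infinite; feeding $(B_{1}\times V')\setminus P$, which has infinite fibres projecting onto the basis set $V'$, once more into Lemma~\ref{lem:has-interior} produces an open box inside $(B_{1}\times V')\setminus P\subset B\setminus P$, completing the induction.

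The main obstacle is the key claim that $W$ has empty interior, since it is exactly there that Lemma~\ref{lem:has-interior} is deployed, and its hypotheses demand genuinely infinite fibres lying over a set that is \emph{itself} a basis set (not merely one containing a basis set); arranging both the infinite fibres and the full projection onto $V''$ is the crux. The remaining technical care goes into the isolated\hyp point bookkeeping --- the reduction that lets me assume each $B_{i}$ is infinite and the observation that finiteness is definable in this family via Lemma~\ref{lem:Let--be} --- which I expect to be routine once phrased as above.
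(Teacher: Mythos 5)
Your proof is correct and is essentially the paper's own argument: induction on $n$ after passing to a box inside $int(X)$, with Lemma~\ref{lem:has-interior} applied twice --- once contrapositively to show that an exceptional set of fibers (your $W$, versus the paper's $\tilde{Z}$ of infinite $Y$-fibers) has empty interior, and once, after invoking the inductive hypothesis to discard that exceptional set, to extract an open box from the complement. The differences are either cosmetic (you fiber over the first rather than the last coordinate, and define the exceptional set by cofiniteness of the complementary fiber rather than infiniteness of the $Y$-fiber) or added rigor (your explicit argument that finiteness of fibers is definable via isolated points, and your handling of finite basic open sets --- two points the paper's terser proof glosses over).
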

\begin{proof}
  We use induction on $n$.

  For $n=1$, the lemma follows directly from the fact that $M$ is
  a 1-t.t.t structure.

  Let's assume the claim is true for $n$. Let $X\subset M_{t}^{n+1}$
  be a definable set with an interior and $Y\subset X$ be a definable
  set with no interior. In addition, we define $\tilde{X}=\pi_{1,\dots,n}(X)$
  , $\tilde{Y}=\pi_{1,\dots,n}(Y)\subset\tilde{X}$, and a set $\tilde{Z}\subset\tilde{Y}$:\[
  \tilde{Z}=\{\bar{y}\in\tilde{Y}:\:\vert(\{\bar{y}\}\times M_{t})\cap Y\vert=\infty\}\]
  
  Since $X$ has an interior, without loss of generality we can assume
  that for every $\bar{x}\in\tilde{X}$, $\vert(\{\bar{x}\}\times M_{t})\cap X\vert=\infty$.
  Furthermore, by lemma \ref{lem:has-interior}, $\tilde{Z}$ has no
  interior. So by the inductive hypothesis, $\tilde{U}=\tilde{X}\backslash\tilde{Z}$
  has an interior. 

  Let $\bar{u}$ be an element in $\tilde{U}$. Since $\vert(\{\bar{u}\}\times M_{t})\cap X\vert=\infty$
  and $\vert(\{\bar{u}\}\times M_{t})\cap Y\vert<\infty$, \[
  \vert(\{\bar{u}\}\times M_{t})\cap(X\backslash Y)\vert=\infty\]
  . So by lemma \ref{lem:has-interior}, $X\backslash Y$ has an interior
  in $M_{t}^{n+1}$.

  This completes the induction and the corollary.
\end{proof}

We can use the corollary to prove a proposition about dense definable
sets in $M_{t}^{n}$.

\begin{prop}\label{prop:dense-then-int-dense}
  Suppose $M$ is an $\omega$-saturated 1-t.t.t structure. Let $X\subset M_{t}^{n}$ be
  a dense definable set. Then $int(X)\subset M_{t}^{n}$ is dense as
  well. 
\end{prop}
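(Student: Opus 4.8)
The plan is to reduce the density of $int(X)$ to the density-implies-interior machinery of Corollary~\ref{cor:complement-interior}, applied locally on basis sets. To show $int(X)$ is dense, I would fix an arbitrary nonempty basic open set $U\subset M_t^n$ and show that $U\cap int(X)\neq\emptyset$; equivalently, that $X$ has nonempty interior somewhere inside $U$. Since $X$ is dense, $U\cap X$ is dense in $U$ and in particular nonempty. The key observation is that the restriction of $X$ to $U$ ought itself to have an interior: intuitively, a dense definable set cannot be ``small'' in the dimension-theoretic sense, since a set with empty interior in $M_t^n$ should have topological dimension $<n$ and hence cannot be dense.

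Concretely, I would first establish the auxiliary fact that a definable set $Y\subset M_t^n$ with empty interior is nowhere dense, i.e.\ its closure has empty interior. For $n=1$ this is immediate from $M$ being $1$-t.t.t: by Lemma~\ref{lem:Let--be}(3) a definable set is a finite union of definably connected opens together with a finite set, so a definable set with empty interior is finite, hence closed and nowhere dense. For the inductive step I would use the fiberwise structure exactly as in Corollary~\ref{cor:complement-interior}: if $Y$ had dense closure, then on a basis set $V\subset\pi_{1,\dots,n-1}(\overline{Y})$ the fibers would have to be infinite on a large set, and Lemma~\ref{lem:has-interior} would force $Y$ to have an interior, a contradiction. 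This is the step where I expect most of the work to live, and it is essentially a repackaging of the fiber-counting already done above.

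Granting that, the proposition follows cleanly. Suppose toward a contradiction that $int(X)$ is not dense, so there is a nonempty basic open $U$ with $U\cap int(X)=\emptyset$. Set $Y=\overline{U\cap X}\cap U$, the closure taken inside $M_t^n$. Because $U\cap int(X)=\emptyset$, the definable set $U\cap X$ has empty interior, and by the auxiliary fact it is nowhere dense in $U$. But then its closure cannot contain the open set $U$, contradicting the density of $X$ (which makes $U\cap X$ dense in $U$, so $\overline{U\cap X}\supset U$). This contradiction shows $int(X)$ meets every nonempty basic open set, i.e.\ $int(X)$ is dense.

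The main obstacle, as flagged, is the inductive proof that empty interior implies nowhere dense for definable sets. The subtlety is that ``empty interior'' is not obviously preserved under projection, so one cannot naively induct on $\pi_{1,\dots,n-1}(Y)$; instead one must pass to the set of fibers that are infinite, show via Lemma~\ref{lem:has-interior} that this fiber-large part would create an interior, and handle the finite-fiber part separately (a set all of whose fibers are finite projects without losing dimension, so one can apply the inductive hypothesis to its projection). Once this bookkeeping is set up correctly — matching the structure of the proof of Corollary~\ref{cor:complement-interior} — the rest is formal.
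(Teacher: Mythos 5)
Your overall plan --- show that $X$ has interior inside every basis set $U$ by invoking Corollary~\ref{cor:complement-interior} --- is the right one, and your final contradiction argument is sound \emph{given} your auxiliary fact that a definable set with empty interior is nowhere dense. The genuine gap is in your proposed proof of that auxiliary fact. In the inductive step you want to feed the ``infinite fibers'' of $\overline{Y}$ over a basis set $V\subset\pi_{1,\dots,n-1}(\overline{Y})$ into Lemma~\ref{lem:has-interior}, but that lemma requires infinite fibers of the set $Y$ itself (or of a definable subset of it), and fibers of a closure are not closures of fibers: $Y$ may have empty or finite fibers over every point of $V$ while $\overline{Y}$ has infinite fibers there. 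Splitting $Y$ into its infinite-fiber and finite-fiber parts, as you suggest, handles the first part (by Lemma~\ref{lem:has-interior} the set of infinite-fiber base points has empty interior), but your treatment of the finite-fiber part does not work: its projection can perfectly well have nonempty interior (think of the graph of a function over an open set), in which case the inductive hypothesis applied to the projection says nothing, and what must be ruled out is precisely a ``dense graph with finite fibers'' --- which is the whole content of the proposition. So the step you dismiss as bookkeeping is, as sketched, circular.

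The repair is short, and it is what the paper does: no nowhere-dense lemma and no new induction are needed. Apply Corollary~\ref{cor:complement-interior} directly inside $U$: since $X$ is dense, $U\setminus X$ has empty interior; $U$ is open and nonempty, so by the corollary $U\cap X=U\setminus(U\setminus X)$ has nonempty interior, and $int(U\cap X)\subset U\cap int(X)$, which is exactly what you wanted. (The same one-line application of the corollary, with a basis set $V\subset\overline{Y}$ in place of $U$, also yields your auxiliary fact if you still want it: $Y\cap V$ is dense in $V$, so $V\setminus Y$ has empty interior, whence $V\cap Y$ would have interior, contradicting that $Y$ has none.)
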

\begin{proof}
  Let $a\in M_{t}^{n}$ be a point and $U\subset M_{t}$ a basis
  set containing $a$. Since $X$ is dense, $U\backslash X$ has an empty
  interior and so by corollary \ref{cor:complement-interior}, $U\cap X$
  has an interior. Therefore, there exists an element $b\in int(X)$
  such that $b\in U$. This finishes the proof.
\end{proof}

Each of following two propositions will be used to show one of the 
inequalities which together will prove the equivalence of the dimensions.

\begin{prop}\label{pro:rank<dim}
  Suppose $M$ is an $\omega$-saturated 1-t.t.t structure. Let $X\subset M_{t}^{n}$ be
  definable over $A$, $0\leq k\leq n$, $1\leq i_{1}<\dots<i_{k}\leq n$
  and $\bar{a}\in X$ such that $(a_{i_{1}},\dots,a_{i_{k}})$ is algebraically
  independent over $A$. Then $\pi_{i_{1},\dots,i_{k}}(X)$ has an interior.
\end{prop}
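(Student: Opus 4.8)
The plan is to reduce to the case $\{i_{1},\dots,i_{k}\}=\{1,\dots,k\}$ and then induct on $k$ (for all ambient dimensions $n$ at once). Since the product topology on $M_{t}^{n}$ is invariant under permuting coordinates, the coordinate permutation sending $i_{1},\dots,i_{k}$ to $1,\dots,k$ is a homeomorphism that preserves definability over $A$ and algebraic independence; so I may assume we project onto the first $k$ coordinates. Writing $Y:=\pi_{1,\dots,k}(X)$, which is definable over $A$ and contains $\bar{b}:=(a_{1},\dots,a_{k})$ with $\bar{b}$ algebraically independent over $A$, the goal becomes to show $Y$ has nonempty interior.

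For the base cases: $k=0$ is trivial since $X\neq\emptyset$. For $k=1$ I would use that $M$ is 1-t.t.t. By part (3) of Lemma \ref{lem:Let--be}, the definable set $Y\subseteq M_{t}$ is a finite union of definable open sets together with a finite set, so if $Y$ had empty interior it would be finite; being $A$-definable and finite, every element of $Y$ would lie in $acl(A)$, contradicting $a_{1}\notin acl(A)$. Hence $Y$ has an interior.

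For the inductive step I assume the proposition for $k-1$. Put $\bar{b}'=(a_{1},\dots,a_{k-1})$; as a subtuple of an independent tuple it is algebraically independent over $A$, and $a_{k}\notin acl(A\cup\{a_{1},\dots,a_{k-1}\})$. The fiber $Y_{\bar{b}'}=\{y:(\bar{b}',y)\in Y\}$ is definable over $A\cup\{a_{1},\dots,a_{k-1}\}$ and contains $a_{k}$; were it finite, $a_{k}$ would be algebraic over $A\cup\{a_{1},\dots,a_{k-1}\}$, a contradiction, so $Y_{\bar{b}'}$ is infinite. I then pass to the set $Z=\{\bar{x}\in M_{t}^{k-1}:Y_{\bar{x}}\text{ is infinite}\}$, which contains $\bar{b}'$. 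Applying the inductive hypothesis to $Z$ (projecting onto all $k-1$ of its coordinates, with the algebraically independent point $\bar{b}'$) shows $Z$ has an interior, hence contains a nonempty basis set $U$. For every $\bar{x}\in U$ the fiber $Y_{\bar{x}}$ is infinite, so $\pi_{1,\dots,k-1}(Y\cap(U\times M_{t}))=U$ is a basis set and all fibers of $Y\cap(U\times M_{t})$ are infinite. Lemma \ref{lem:has-interior} then yields that $Y\cap(U\times M_{t})$, and therefore $Y$, has nonempty interior, completing the induction.

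The main obstacle will be justifying that $Z$ is definable over $A$, since the inductive hypothesis applies only to definable sets. This is exactly where the 1-t.t.t hypothesis is needed: because the set $F$ of isolated points of $M_{t}$ is finite and $\emptyset$-definable (Lemma \ref{lem:Let--be}(2), expressed via the basis formula $\beta$), and because a nonempty open set disjoint from $F$ must be infinite (a finite open set in a Hausdorff space consists of isolated points), a definable $S\subseteq M_{t}$ is infinite precisely when $S\setminus F$ contains a nonempty basis set. That last condition is first-order in the defining parameters of $S$, and applying it uniformly to the fibers $Y_{\bar{x}}$ exhibits $Z$ as an $A$-definable set. Verifying this equivalence carefully is the one genuinely technical point; the rest is bookkeeping built on Proposition \ref{prop:2-d-nonempty-int} and Lemma \ref{lem:has-interior}.
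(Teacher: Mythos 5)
Your proof is correct and follows essentially the same route as the paper: the paper also argues by induction (on the ambient dimension $n$, with a case split on whether $i_k=n+1$, in place of your permutation reduction and induction on $k$), forms the same set of points with infinite fibers, applies the inductive hypothesis to that set, and concludes with Lemma \ref{lem:has-interior}. Your explicit verification that the infinite-fiber set $Z$ is $A$-definable (via the finite, definable set of isolated points) addresses a point the paper's proof silently takes for granted, so it is a refinement rather than a divergence.
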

\begin{proof}
  We use induction on $n$.

  If $n=1$, then since $a_{i_{1}}\notin acl(A)$, $X$ is infinite
  and thus has an interior.

  Let's assume the claim holds for $n$.

  First we assume that $i_{k}<n+1$. In this case, the claim follows
  directly from the inductive hypothesis.

  Now let's assume that $i_{k}=n+1$. We define $Y=\pi_{i_{1},\dots,i_{k-1},n+1}(X)$
  , $Z=\pi_{i_{1},\dots,i_{k-1}}(X)=\pi_{i_{1},\dots,i_{k-1}}(Y)$, and:\[
  C=\{\bar{z}\in Z:\:\vert(\{\bar{z}\}\times M_{t})\cap Y\vert=\infty\}\]
  . Since $(a_{i_{1}},\dots,a_{i_{k}})$ is algebraically independent
  over $A$, $(a_{i_{1}},\dots,a_{i_{k-1}})\in C$. So by the inductive
  hypothesis, $C$ has a non-empty interior and by lemma \ref{lem:has-interior},
  $Y$ has an interior.

  This completes the induction and the proposition.
\end{proof}

\begin{prop}\label{pro:dim<rank}
  Suppose $M$ is an $\omega$-saturated 1-t.t.t structure. Let $X\subset M_{t}^{n}$ be
  definable over $A$, $0\leq k\leq n$, and $1\leq i_{1}<\dots<i_{k}\leq n$
  such that $\pi_{i_{1},\dots,i_{k}}(X)$ has an interior. Then there
  exists an elementary extension $M\prec N$ and a tuple $\bar{a}\in X(N)$ such that $(a_{i_{1}},\dots,a_{i_{k}})$
  is algebraically independent over $A$.
\end{prop}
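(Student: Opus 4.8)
The plan is to locate, inside the nonempty interior of $\pi_{i_{1},\dots,i_{k}}(X)$, a basic open box all of whose sides are infinite, and then to choose a ``generic'' point of that box in a sufficiently saturated elementary extension. First I would use that $int(\pi_{i_{1},\dots,i_{k}}(X))$ is a nonempty open subset of $M_{t}^{k}$ in the product topology to produce basic open sets $U_{1},\dots,U_{k}\subset M_{t}$ with $U_{1}\times\dots\times U_{k}\subset\pi_{i_{1},\dots,i_{k}}(X)$, arranged so that each $U_{j}$ is infinite. Here I would invoke Lemma \ref{lem:Let--be}: the isolated points of $M_{t}$ form a finite, $\emptyset$-definable set, so a nonempty basic open set can be finite only if it consists entirely of isolated points; away from these finitely many points every nonempty open set is infinite, and I would shrink the box accordingly. (The case $k=0$ is trivial, since then any $\bar a\in X$ works.)

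Next I would pass to a $\kappa$-saturated elementary extension $M\prec N$ with $\kappa>|A|+|L|+\aleph_{0}$, noting that $N$ again has the exchange property since it is a model of the same complete theory, so Lemma \ref{lem:independent} is available in $N$. The point of the saturation is twofold: each $U_{j}$, being infinite and definable, is realized by at least $\kappa$ elements in $N$, whereas for any $B\subset N$ with $|B|<\kappa$ one has $|acl(B)|<\kappa$. Hence I can choose, by induction on $j$, elements $b_{j}\in U_{j}(N)$ with $b_{j}\notin acl(A\cup\{b_{1},\dots,b_{j-1}\})$. Starting from $b_{1}\notin acl(A)$ and applying Lemma \ref{lem:independent} at each step, the tuple $(b_{1},\dots,b_{k})$ is algebraically independent over $A$.

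Finally I would lift $(b_{1},\dots,b_{k})$ back into $X$ by elementarity. Since $U_{1}\times\dots\times U_{k}\subset\pi_{i_{1},\dots,i_{k}}(X)$ holds in $M$, and the $U_{j}$ are defined by a formula with parameters from $M_{b}$, the sentence asserting that every element of the box extends to a point of $X$ with the prescribed coordinates $i_{1},\dots,i_{k}$ is true in $M$ and therefore in $N$. Applying it to $(b_{1},\dots,b_{k})$ yields $\bar a\in X(N)$ with $(a_{i_{1}},\dots,a_{i_{k}})=(b_{1},\dots,b_{k})$, which is algebraically independent over $A$, as required.

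I expect the delicate step to be the first one, namely guaranteeing that the sides $U_{j}$ of the box can be taken infinite. This is exactly where the finiteness of the isolated set from Lemma \ref{lem:Let--be} matters: an isolated point is algebraic over $A$, so a coordinate pinned to such a point can never be made independent, and the argument breaks unless the box genuinely avoids them. Once the infinite-factor box is secured, the remaining two steps are routine: the generic choice is a standard saturation-versus-algebraic-closure cardinality count, and the passage from $M$ to $N$ is a direct appeal to elementarity.
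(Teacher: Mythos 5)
Your proof takes a genuinely different route from the paper's. The paper argues by induction on $n$, splitting on whether $i_k=n+1$, using compactness for the base case and Lemma \ref{lem:independent} to append one last coordinate at a time; you instead extract a single open box $U_1\times\dots\times U_k\subset\pi_{i_1,\dots,i_k}(X)$, pass once to a $\kappa$-saturated extension with $\kappa>|A|+|L|+\aleph_0$, and choose the coordinates successively via the count $|U_j(N)|\geq\kappa>|acl(A\cup\{b_1,\dots,b_{j-1}\})|$. Your second and third steps are sound, and avoiding the induction is arguably cleaner. One small repair: that $N$ has the exchange property does not follow merely from $N\equiv M$ (exchange is not an obviously elementary condition); it follows because $N$ is again 1-t.t.t --- the paper quotes \cite[14]{D} for this --- together with Theorem \ref{thm:ttt-exchange}.

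The genuine gap is exactly the step you flag as delicate: nothing guarantees that the box can be shrunk so that every factor avoids the isolated points, because the entire interior of $\pi_{i_1,\dots,i_k}(X)$ may sit over isolated coordinates. Concretely, let $M_t$ be an $\omega$-saturated model of the theory of a dense linear order without endpoints together with one extra, incomparable point $p$, where the basic open sets are the order-intervals and the singleton $\{p\}$. Every definable subset of $M_t$ is a finite union of points and intervals together with possibly $\{p\}$, from which one checks (A), $D(M_t)=1$, and finiteness of $d_M$; so this is 1-t.t.t, it stays 1-t.t.t in elementary extensions by \cite[14]{D}, $p$ stays isolated in every elementary extension, and $p\in acl(\emptyset)$. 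Taking $n=k=1$ and $X=\{p\}$, the projection $\pi_1(X)=X$ is open and nonempty, yet no element of $X(N)$ is ever algebraically independent over $A$: the proposition itself is false here, so no shrinking argument can close the gap. In fairness, the paper's own proof contains the same unacknowledged hole --- its base case asserts that a definable set with interior is infinite, and its inductive step asserts that the nonempty basis set $V$ is infinite, both of which fail at isolated points (and the same example breaks Theorem \ref{thm:rank=dim}, since there $rk(X)=0$ while $dim(X)=1$). What your delicate step has really uncovered is a missing hypothesis, e.g.\ that $M_t$ has no isolated points; under that hypothesis your argument, and the paper's, goes through.
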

\begin{proof}
  We use induction on $n$.

  Let $n=1$ and $X\subset M_{t}$ a subset definable over $A$. For
  $k=0$ there's nothing to show. 

  If $k=1$ then $X$ is infinite so by compactness there exists an elementary extension $M\prec N$
  and an element $x\in X(N)$ such that $x\notin acl(A)$. Therefore, we can take
  $a_{i_{1}}=x$.

  Let's assume the claim is true for $n$. Let $X\subset M_{t}^{n+1}$
  be definable over $A$ and $0\leq k\leq n+1$ such that $dim(X)=k$
  and $\pi_{i_{1},\dots,i_{k}}(X)$ has an interior.

  First we assume that $i_{k}<n+1$.

  Let's define $Y=\pi_{1,\dots n}(X)$. According to the assumption,
  $\pi_{i_{1},\dots,i_{k}}(Y)$ has an interior. So by the inductive
  hypothesis, there exists an elementary extension $M\prec N$ and a tuple $\bar{y}\in Y(N)$ 
  such that $(y_{i_{1}},\dots,y_{i_{k}})$
  is algebraically independent over $A$. Since $\bar{y}\in Y(N)$, there
  exists an element $x\in N_{t}$ such that $\bar{a}\mathcircumflex x\in X(N)$.

  Now let's assume that $i_{k}=n+1$.

  Let's define $Y=\pi_{i_{1},\dots,i_{k-1},n+1}(X)$. According to the
  assumption, $Y$ has a non-empty interior. This means that there exist
  basis sets $U\subset\pi_{i_{1},\dots,i_{k-1}}(X)$ and $V\subset M_{t}$
  such that $U\times V\subset Y$. According to the inductive hypothesis,
  there exists an elementary extension $M\prec N$ and a tuple $\bar{u}=(u_{1},\dots,u_{k})\in U(N)$ such that
  $(u_{1},\dots,u_{k})$ is algebraically independent over $A$. In
  addition, since $V$ is infinite, we can find an elementary extension $N\prec N'$ and an element $v\in V(N')$
  such that $v\notin acl(\bar{u}/A)$. By lemma \ref{lem:independent},
  $\bar{u}\mathcircumflex v\in A$ is algebraically independent over
  $A$.

  This finishes the induction and proves the proposition.
\end{proof}

\begin{thm}\label{thm:rank=dim}
  Suppose $M$ is an $\omega$-saturated 1-t.t.t structure. Let $X\subset M_{t}^{n}$ be
  definable. Then $rk(X)=dim(X)$.
\end{thm}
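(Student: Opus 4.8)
The plan is to prove the two inequalities $rk(X) \leq dim(X)$ and $dim(X) \leq rk(X)$ separately, invoking the two propositions that were established precisely for this purpose. The whole argument should be short because the heavy lifting is already done.

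First I would unwind the definitions. By Definition \ref{defn:rank-of-set} and the remark following it, $rk(X)$ is the maximum of $rk(p/A)$ over types $p$ realized in $X$ (possibly in an elementary extension), where $A$ is any set over which $X$ is definable. Spelling out Definition \ref{def:rank}, $rk(\bar a/A)$ for a tuple $\bar a = (a_1,\dots,a_n)$ is the size of a maximal algebraically independent subtuple over $A$. So $rk(X) = k$ means exactly that there is some $\bar a \in X(N)$ and indices $i_1 < \dots < i_k$ with $(a_{i_1},\dots,a_{i_k})$ algebraically independent over $A$, and no realization admits an independent subtuple of size $k+1$. This reformulation of $rk$ in terms of ``largest independent subtuple of some realization'' is the bridge to the topological side, and it is the step to set up carefully.

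For $dim(X) \leq rk(X)$: suppose $dim(X) = k$, so by Definition \ref{defn:top-dim} there are indices $i_1 < \dots < i_k$ with $int(\pi_{i_1,\dots,i_k}(X)) \neq \emptyset$. Proposition \ref{pro:dim<rank} then yields an elementary extension $M \prec N$ and a tuple $\bar a \in X(N)$ with $(a_{i_1},\dots,a_{i_k})$ algebraically independent over $A$. Hence $rk(\bar a/A) \geq k$, so by the reformulation above $rk(X) \geq k = dim(X)$.

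For $rk(X) \leq dim(X)$: suppose $rk(X) = k$, witnessed by $\bar a \in X(N)$ and indices $i_1 < \dots < i_k$ with $(a_{i_1},\dots,a_{i_k})$ algebraically independent over $A$. Proposition \ref{pro:rank<dim} directly gives that $\pi_{i_1,\dots,i_k}(X)$ has an interior, so $dim(X) \geq k = rk(X)$ by Definition \ref{defn:top-dim}. Combining the two inequalities gives $rk(X) = dim(X)$. The only genuine obstacle is bookkeeping the elementary-extension witnesses: Proposition \ref{pro:rank<dim} is stated for $\bar a \in X$ (in $M$) while $rk(X)$ is computed over realizations in an extension $N$, so I would either note that both propositions and the rank are invariant under passing to $\omega$-saturated elementary extensions, or simply observe via Lemma \ref{lem:properties}(4) and the remark after Definition \ref{defn:rank-of-set-rem} that we may compute everything in a single sufficiently saturated model, eliminating the mismatch.
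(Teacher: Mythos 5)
Your overall strategy is exactly the paper's: prove the two inequalities separately, getting $dim(X)\leq rk(X)$ from Proposition \ref{pro:dim<rank} and $rk(X)\leq dim(X)$ from Proposition \ref{pro:rank<dim}. The first direction is fine as you wrote it. The gap is in the second direction, precisely at the point you flagged as ``bookkeeping.'' The rank witness $\bar{a}$ lives in $X(N)$ for some elementary extension $N$, and you want to apply Proposition \ref{pro:rank<dim} to it; but that proposition's hypothesis is that the ambient structure is an $\omega$-saturated 1-t.t.t structure. Your proposed fix --- that ``both propositions and the rank are invariant under passing to $\omega$-saturated elementary extensions'' --- silently assumes that an elementary extension of a 1-t.t.t structure is again 1-t.t.t. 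This is not a routine consequence of elementarity: the defining conditions of t.t.t (that $D_{M}(X)\neq\infty$ for every closed definable $X$, and $d_{M}(X)<\infty$ for every definable $X$) are not first-order schemas in any obvious way, so nothing you have said transfers them to $N$. The paper has to invoke an external result (cited as \cite[14]{D}) exactly here: it passes to $N$, cites that result to conclude $N$ is 1-t.t.t, applies Proposition \ref{pro:rank<dim} inside $N$ to get that $\pi_{i_{1},\dots,i_{k}}(X(N))$ has an interior, and then transfers the interior down to $M$ by elementarity (nonempty interior is witnessed by a basis parameter, hence first-order). Your second suggested fix (``compute everything in a single sufficiently saturated model'') has the same hidden dependency, and Lemma \ref{lem:properties}(4) together with the remark do not supply it --- they concern rank only, not the t.t.t axioms.

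There is, however, a repair that uses nothing beyond what is already in the paper and avoids the citation entirely: $X$ is definable, hence definable over a \emph{finite} parameter set $A$, and $rk(X)$ may be computed over this $A$ (remark after Definition \ref{defn:rank-of-set-rem}). A type $p\in S_{n}(A)$ of maximal rank realized in $X(N)$ is then a type over a finite set, so $\omega$-saturation of $M$ itself realizes it by some $\bar{a}\in M^{n}$; since the formula defining $X$ belongs to $p$, in fact $\bar{a}\in X$, and since algebraic independence of a subtuple is determined by the type, $(a_{i_{1}},\dots,a_{i_{k}})$ is algebraically independent over $A$. Now Proposition \ref{pro:rank<dim} applies directly in $M$, with no elementary extension and no appeal to preservation of the t.t.t axioms. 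Either make this observation explicit, or do what the paper does and cite the preservation result; as written, the invariance claim is the one unproved --- and genuinely non-trivial --- step in your argument.
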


\begin{proof}
  Let's assume that $X$ is definable over $A$. 

  We first prove that $rk(X)\leq dim(X)$.

  Let's set $0\leq k\leq n$ such that $rk(X)=k$. By the definition of $rk(X)$ (see remark \ref{rem:rank-rem}),
  there exists an elementary extension $M\prec N$, a tuple $\bar{a}\in X(N)$, and indices $1\leq i_{1}<\dots<i_{k}\leq n$
  such that $(a_{i_{1}},\dots,a_{i_{k}})$ is algebraically independent
  over $A$. 

  By \cite[14]{D}, $N$ is also a 1-t.t.t structure.

  Therefore, by proposition \ref{pro:rank<dim}, $\pi_{i_{1},\dots,i_{k}}(X(N))$ has an interior
  which means that $\pi_{i_{1},\dots,i_{k}}(X)$ has an interior as well.
  So by the definition of $dim(X)$, $dim(X)\geq k$.

  We now prove that $rk(X)\geq dim(X)$.

  Let's set $0\leq k\leq n$ such that $dim(X)=k$. By the definition of $dim(X)$,
  there exist indices $1\leq i_{1}<\dots<i_{k}\leq n$ such that $\pi_{i_{1},\dots,i_{k}}(X)$
  has an interior. Therefore, by proposition \ref{pro:dim<rank}, there
  exists an elementary extension $M\prec N$ and a tuple $\bar{a}\in X(N)$ such that $(a_{i_{1}},\dots,a_{i_{k}})$
  is algebraically independent over $A$. So by the definition of $rk(X)$,
  $rk(X)\geq k$.

  So together, we proved that $rk(X)=dim(X)$.
\end{proof}

\subsection*{Acknowledgement}
  I'd like to thank Ehud Hrushovski for many helpful discussions.

\bibliographystyle{jflnat}
\bibliography{ttt_dimension}

\end{document}